\numberwithin{equation}{section}
\newif\ifdraft\drafttrue
\long\def\comlior#1{\ifdraft{\marginpar{\sn
#1 \ LF}}\else\ignorespaces\fi}
\newcommand{\br}{{\mathbb{R}}}
\newcommand{\ct}{\mathcal T}
\font\sb = cmbx8 scaled \magstep0
\font\sn = cmssi8 scaled \magstep0
\long\def\comdima#1{\ifdraft{\marginpar{\sb
#1 \ DK}}\else\ignorespaces\fi}
\newcommand\ba{badly approximable}
\newcommand\da{Diophantine approximation}
\newcommand\ssm{\smallsetminus}
\newcommand\name[1]{\label{#1}{\ifdraft{\sn [#1]}\else\ignorespaces\fi}}
\newcommand\eq[2]{{\ifdraft{\ \tt [#1]}\else\ignorespaces\fi}\begin{equation}\label{eq:#1}{#2}\end{equation}}
\newcommand {\equ}[1]     {\eqref{eq:#1}}
\newcommand{\goth}[1]{{\mathfrak{#1}}}
\newcommand\z{\goth z}
\newcommand{\Q}{{\mathbb {Q}}}
\newcommand{\R}{{\mathbb{R}}}
\newcommand{\T}{{\mathbb{T}}}
\newcommand{\Z}{{\mathbb{Z}}}
\newcommand{\N}{{\mathbb{N}}}
\newcommand{\ubf}{\mathbf{u}}
\newcommand{\vbf}{\mathbf{v}}
\newcommand{\w}{\mathbf{w}}
\newcommand{\GL}{\operatorname{GL}}
\newcommand{\supp}{\operatorname{supp}}
\newcommand {\ignore}[1]  {}
\newcommand{\df}{{\, \stackrel{\mathrm{def}}{=}\, }}
\newcommand{\x}{{\mathbf{x}}}
\newcommand{\ve}{{\bf e}}
\newcommand{\vre}{\varepsilon}
\newcommand\hd{Hausdorff dimension}
\newcommand\nz{\smallsetminus \{0\}}
\newcommand{\rn}{\mathbb{R}^n}
\newcommand{\rmb}{\mathbb{R}^m}
\newcommand{\zm}{\mathbb{Z}^m}
\newcommand{\zn}{\mathbb{Z}^n}
\newcommand{\q}{\mathbf{q}}
\newcommand{\y}{\mathbf{y}}
\newcommand{\cy}{{\mathcal Y}}
\newcommand{\ca}{\mathcal Z}
\newcommand{\cl}{\mathcal L}
\newcommand{\cm}{\mathcal M}
\newcommand{\M}{\operatorname{M}}
\newtheorem{thm}{Theorem}[section]
\newtheorem{lem}[thm]{Lemma}
\newtheorem{prop}[thm]{Proposition}
\newtheorem{cor}[thm]{Corollary}
\newtheorem{remark}[thm]{Remark}
\newtheorem{defn}[thm]{Definition}
\begin{document}

\title[Schmidt's game, fractals, and orbits of toral endomorphisms]{Schmidt's game, fractals, and \\ orbits of toral endomorphisms}
\author[Broderick, Fishman, Kleinbock]{Ryan Broderick, Lior Fishman and Dmitry Kleinbock\\
% \\ {\sn (Preliminary version, not for distribution)}
 }

\address{Department of Mathematics, Brandeis University, Waltham MA 02454-9110} %USA, 
\email{ {\tt ryanb@brandeis.edu}, {\tt lfishman@brandeis.edu}, {\tt kleinboc@brandeis.edu}}

\date{May  2010}
\begin{abstract}

Given an integer matrix  $M\in \GL_n(\R)$ and a point $y \in \mathbb{R}^n/\mathbb{Z}^n$, consider the set $$
\tilde E(M,y) \df \left\{\x\in \mathbb{R}^n: y\notin\overline{ \{M^k \x \bmod \Z^n: k\in\N\}}\right\}\,.$$
%of $x\in \mathbb{R}^n$ such that $y$ is not a limit point of the sequence $\{M^k x \bmod \Z^n: k\in\N\}$. 
S.G.\ Dani showed in 1988 that whenever $M$ is semisimple
 and $y \in \mathbb{Q}^n/\mathbb{Z}^n$,  the set $
\tilde E(M,y)$ %is dense and 
has full Hausdorff dimension.
%winning in the sense of W.\ Schmidt
%(a property implying density and full Hausdorff dimension). 
In this paper we strengthen this result, extending it to arbitrary $M\in \GL_n(\R) \cap \M_{n\times n}(\Z)$ and $y \in \mathbb{R}^n/\mathbb{Z}^n$, and in fact replacing the sequence of powers of $M$ by any 
lacunary sequence of (not necessarily integer) $m\times n$ matrices. Furthermore, we show that sets of the form $
\tilde E(M,y)$ and their generalizations always intersect with  `sufficiently regular' fractal subsets of $ \mathbb{R}^n$.  As an application we 
%D
give an alternative proof of a recent result of \cite{ET}
%strengthen recent results of \cite{BHKV, M, T2}
on badly approximable systems of affine forms. 
\end{abstract}
\maketitle
%\comdima{The abstract needs to be changed.}

\section{Introduction} \label{intro} 
Let $\T^n \df \R^n/\Z^n$ be the $n$-dimensional torus. Any non-singular 
$n\times n$ matrix $M$ with integer entries 
%\in \GL_n(\R) \cap \M_{n\times n}(\Z)$$ 
defines a continuous surjective endomorphism $f_M$ of $\T^n$ given by 
$$
f_M(\x + \Z^n) \df M\x + \Z^n\quad\forall\,\x\in\R^n\,,
$$
and any continuous surjective endomoprhism $f$ of $\T^n$ can be obtained this way. Criteria for ergodicity of
$f$ (with respect to Haar measure on $\T^n$) are well known, and ergodicity implies that $f$-orbits of 
almost all points are dense in $\T^n$. Also in many cases it is known that exceptional sets of points with non-dense orbits
are rather big. For example, following the notation used in \cite{K},  let us define
\eq{defefy}{
E(f,y) \df \left\{x\in\T^n : y\notin \overline{\{f^k(x): k\in\N\}}  \right\}
}
for a fixed $y\in\T^n$ and a self-map $f$ of $\T^n$. 
In 1988  %S.G.\ 
Dani proved

\begin{thm}
\label{Daniold} \cite[Theorem 2.1]{D}
For  any semisimple $M\in \GL_n(\R) \cap \M_{n\times n}(\Z)$ and any $y\in\Q^n/\Z^n$,
% and
%any $0 < \alpha \le 1/2$, 
the set
$ E(f_M,y)$ is $\frac12$-winning.
\end{thm}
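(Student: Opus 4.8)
The plan is to play Schmidt's game directly on $\T^n$, working in a small chart lifted to $\R^n$, and to produce for every $\beta\in(0,1)$ a strategy for the player aiming to land in $E(f_M,y)$ (the one who controls the factor-$\alpha$ contractions, with $\alpha=\tfrac12$). Since $y\notin\overline{\{f_M^k(\x):k\in\N\}}$ the moment the forward orbit of $\x$ misses one fixed open neighborhood of $y$, I would first fix a small ball $U=B(y,\epsilon)$ and aim to force
$$\x\notin\bigcup_{k\in\N}f_M^{-k}(U)\,,$$
using that $f_M^k(\x)\in U$ is equivalent to $\x\in f_M^{-k}(U)$. The set of such $\x$ is contained in $E(f_M,y)$, and any set containing an $(\alpha,\beta)$-winning set is itself $(\alpha,\beta)$-winning, so it suffices to win this ``dodging'' game against the countable family $\{f_M^{-k}(U)\}_k$. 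Everything then reduces to understanding the geometry of the preimages well enough to avoid them one scale at a time.

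The geometric input is semisimplicity. Splitting $\R^n$ into $M$-invariant subspaces $V^+\oplus V^0\oplus V^-$ on which the eigenvalues have modulus $>1$, $=1$, $<1$, semisimplicity guarantees that $\|M^k|_{V^+}\|$ grows \emph{purely} geometrically and $\|M^k|_{V^0}\|$ stays bounded, with no intervening polynomial (Jordan) factors. Here a clean dichotomy appears: either $V^+\neq\{0\}$, giving genuine exponential expansion, or every eigenvalue has modulus $\le 1$, in which case $\prod_i|\lambda_i|=|\det M|\ge 1$ forces all $|\lambda_i|=1$; since the $\lambda_i$ are algebraic integers with all conjugates on the unit circle, Kronecker's theorem makes them roots of unity, and semisimplicity then makes $M$ of finite order. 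In the finite-order case each forward orbit is finite, so $E(f_M,y)$ is the complement of the finitely many points whose orbit meets $y$, which is trivially $\tfrac12$-winning. Thus the substance is the expanding case $V^+\neq\{0\}$.

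In that case, lift $U$ to $\R^n$, so that each $f_M^{-k}(U)$ is a union of translates of the ellipsoid $M^{-k}B(0,\epsilon)$ centered at the coset $M^{-k}(\tilde y+\Z^n)$. In the $V^+$ directions $M^{-k}$ contracts, so each such ellipsoid is contained in a slab of width $\asymp\epsilon\,\|M^{-k}|_{V^+}\|$ transverse to a fixed $V^+$ direction. I would match the game scale $\rho_j\asymp(\alpha\beta)^j$ to the orbit index $k$ so that $\|M^{-k}|_{V^+}\|\asymp\rho_j$; then the ``active'' pieces at scale $j$ are slabs of width $\ll\rho_j$ (taking $\epsilon$ small). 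The player shrinking by $\alpha=\tfrac12$, handed a ball of radius $\asymp\rho_j$, places his ball of radius $\tfrac12\rho_j$ so as to step off these thin slabs with a definite margin by moving in the corresponding $V^+$ direction; iterating over all $j$, and hence over all $k$, the nested balls contract to a single $\x$ avoiding every $f_M^{-k}(U)$, so $\x\in E(f_M,y)$. As $\beta$ was arbitrary this yields $\tfrac12$-winning.

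The crux — and the step I expect to be the main obstacle — is the bookkeeping ensuring that only \emph{boundedly many} active slabs meet the current ball at each scale, each thin enough to be dodged with only the room $\alpha=\tfrac12$ allows. The delicate directions are $V^0$ and $V^-$, where $M^{-k}$ expands rather than contracts the ellipsoids, so one must check that the resulting constraints either become vacuous for large $k$ or fall into the bounded-count regime; and one must control the transverse spacing of the centers $M^{-k}(\tilde y+\Z^n)$. This is precisely where I expect the rationality $y\in\Q^n/\Z^n$ to enter: it pins down the arithmetic of these preimage cosets (they sit in a controlled refinement of $\Z^n$), fixing the spacing of dangerous pieces and hence the per-scale count. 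I anticipate this counting estimate, rather than the game-theoretic dodging itself, to be the technical heart of the argument.
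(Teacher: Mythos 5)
Your overall architecture is the right one and matches the route this paper takes to recover Dani's theorem (dichotomy via Kronecker's theorem on roots of unity; in the expanding case, preimages of a small ball around $y$ become thin neighborhoods of hyperplanes at a geometric rate, matched to the game scale and dodged one scale at a time). But the step you yourself flag as ``the crux'' is in fact the entire technical content of the theorem, and you have not supplied it. Two things are missing. First, the per-scale count: you need that in each window of scales $[(\alpha\beta)^{-r(j-1)},(\alpha\beta)^{-rj})$ only boundedly many indices $k$ are active --- this comes from lacunarity of $\|M^k\|_{op}$ (which for semisimple $M$ with an eigenvalue of modulus $>1$ must be verified, e.g.\ by the Jordan/eigenvalue computation) --- \emph{and} that for each such $k$ at most one translate of the thin preimage meets the current ball. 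The latter follows from the estimate $d\big(M^{-k}(B(\y_1,c)),M^{-k}(B(\y_2,c))\big)\ge(1-2c)/\|M^k\|_{op}$ for distinct $\y_1,\y_2\in\pi^{-1}(y)$, which at the matched scale exceeds the diameter of Bob's ball. Second, the evasion lemma: with $\alpha=\tfrac12$ Alice cannot in general step off $N$ hyperplane-neighborhoods in a single move; one needs Schmidt's escaping lemma (or Moshchevitin's version, \cite[Lemma 2]{M}), which lets her discard a definite fraction of the $N$ constraints per turn and hence clear all of them in $O(\log N)$ turns, and the scale window must be chosen wide enough ($r\asymp\log N$ moves) to accommodate this. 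Without these two quantitative inputs the ``dodging game'' is not won, and the value $\alpha=\tfrac12$ is not justified.

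Two of your anticipations are also misdirected. The rationality of $y$ plays no role in the spacing of the dangerous pieces: what matters is that $\pi^{-1}(y)=\tilde y+\Z^n$ is $1$-uniformly discrete \emph{upstairs}, which holds for every $y\in\T^n$ and, via the displayed separation estimate, already forces at most one translate per index $k$ to be relevant. (Indeed the paper's Theorem \ref{Daninew} removes the rationality hypothesis entirely; where rationality genuinely enters Dani's original argument is elsewhere.) Likewise your worry about the $V^0$ and $V^-$ directions is unfounded: to trap $M^{-k}(B(\y,c))$ in a thin slab you only need \emph{one} direction of expansion --- take $\vbf_k$ with $\|M^k\vbf_k\|=\|M^k\|_{op}$ and the hyperplane $W=M^{-k}(V)$ with $V\perp M^k\vbf_k$; then $M^{-k}(B(\mathbf{0},c))\subset W^{(c/\|M^k\|_{op})}$ regardless of how badly $M^{-k}$ stretches the ellipsoid along $W$. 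A neighborhood of a hyperplane is exactly what the evasion lemma handles, however large its extent in the tangential directions, so no separate analysis of $V^0\oplus V^-$ is needed. Finally, in your finite-order case, note that the complement of a finite set being $\tfrac12$-winning (rather than merely $\alpha$-winning for $\alpha<\tfrac12$) requires a short argument --- Alice places the offending point on the boundary of her ball and escapes on the following turn --- or, as in this paper, an appeal to Corollary \ref{minusone}.
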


%\comliorryan{Should be "winning on $\T^n$" or just "winning".}

%that $E(f_M,y)$ is an $\alpha$-winning set for all $0 <\alpha\le 1/2$ whenever
%$y \in\Q^n/\Z^n$ and %the matrix 
%$M\in \GL_n(\R) \cap \M_{n\times n}(\Z)$ is semisimple.
The  above winning property %is a stronger version of\comdima{Maybe not the best way to say it.}
is based on a game, introduced by %W.\ 
Schmidt 
in \cite{S1}, which is usually referred to as Schmidt's game. This property
implies density and  full \hd\ and is  stable with respect to countable intersections;
see \S \ref{Schmidt} for more detail. 

One of the goals of the present paper is to prove a far-reaching generalization of Theorem \ref{Daniold}.
Namely, we remove the assumptions of $M$ being semisimple and $y$ being rational. Also
we are able to intersect sets $E(f,y)$ with many `sufficiently regular' fractal subsets of $\T^n$. 
In fact it will be more convenient to lift the problem to $\R^n$: denote by $\pi$ the quotient map
$\R^n\to \T^n$ and, for  $M\in  \M_{n\times n}(\R)$ and $y\in\T^n$,
 consider
\eq{defeay}{
\tilde E(M,y) \df \left\{\x\in\R^n : y\notin \overline{\{\pi(M^k\x): k\in\N\}}  \right\}\,.
}
%where $d(\cdot,\cdot)$ is the Euclidean distance on $\R^n$. 
Clearly $\tilde E(M,y) = \pi^{-1}\big( E(f_M,y)\big)$ when $M\in \GL_n(\R) \cap \M_{n\times n}(\Z)$; 
however the definition  \equ{defeay} makes sense even when $M$ is singular or has non-integer entries. 

%D
The `sufficient regularity' of   subsets of $\rn$ will be characterized by their ability to support 
%D
so-called
{\sl absolutely decaying\/}
measures; 
%satisfying the \emph{absolute decay} condition; 
see \cite{KLW} or \S \ref{measures} for a definition.
\comdima{I agree, let's move the definition to the section about measures}
% Namely, following a terminology introduced in \cite{KLW, PV}, say that $\mu$ is
%{\sl absolutely   decaying\/} if there exists $C,\gamma,\rho_{0} >0$ 
%and $D > 1$ 
%such that
%\eq{ad}{\begin{aligned}
%\mu\big(B(\x,\rho)\cap \mathcal{L}^{(\varepsilon)}\big) 
%	< C(\varepsilon/\rho)^{\gamma}\mu\big(B(\x,\rho)\big)\ \\
%	 \text{ for any  affine hyperplane }\mathcal{L}\subset \rn\quad \ \\  \text{
%and }\forall\,\x\in \supp \, \mu,\  0 < \rho<\rho_0,
%	\  \varepsilon > 0\,.\ \ 
%\end{aligned}}
%Here $B(\x,\rho)$ stands for the closed Euclidean ball in $\R^n$ of radius $\rho$ centered at $\x$, 
%and $\mathcal{L}^{(\varepsilon)} \df \{\x \in\rn : d(\x,\mathcal{L}) \leq \vre\}$ is the closed $\vre$-neighborhood of $\mathcal{L}$. 
%In \S\ref{measures} we will describe the class of {\sl absolutely decaying measures\/} on $\R^n$,
%Pushing this terminology a little further, we will
%say that a closed subset $K$ of $\R^n$ is  {\sl absolutely friendly\/} if there exists an  absolutely friendly measure $\mu$ on $\R^n$ such that $K = \supp\,\mu$. 
Examples include $\R^n$ itself and limit sets of irreducible families of 
contracting similarities of $\R^n$ satisfying the Open Set Condition, such as 
the Koch snowflake or the Sierpinski carpet.
%with $\gamma = 1$.
%thus $\R^n$ itself is absolutely friendly. 
Other interesting examples can be found in \cite{KLW, SU, U2}. %In \S\ref{measures} we will describe the class of {\sl absolutely decaying measures\/} on $\R^n$.
%following a terminology introduced in \cite{KLW, PV}. 
%Pushing this terminology a little further, we will
%say that a closed subset $K$ of $\R^n$ is  {\sl absolutely decaying\/} if there exists an  absolutely decaying measure $\mu$ on $\R^n$ such that $K = \supp\,\mu$. Lebesgue measure constitutes a trivial example;
%thus $\R^n$ itself is absolutely decaying. Other, more interesting examples can be found in \cite{KLW, SU, U2}. For example, limit sets of irreducible families of contracting similarities of $\R^n$, such as 
%the Koch snowflake or the Sierpinski carpet,  are absolutely decaying.

%D
 It turns out, as was first observed %\footnote{In fact, there the slightly stronger absolute friendliness condition was studied.}
 in \cite{F}, that %absolutely decaying sets provide 
the absolute decay property 
%D
of a measure
can be used for playing Schmidt's game 
on 
%D
%the 
its support.
%D of $\mu$. 
Namely, we will 
%view $K\subset \R^n$ as a metric space with the metric
%$d(\cdot,\cdot)$ induced from $\R^n$, and 
say, following \cite{BBFKW}, that a subset $S$ of $\R^n$ is
{\sl $\alpha$-winning on\/} a subset $K$ of $\R^n$ if $S\cap K$ is $ \alpha$-winning for Schmidt's
game played on the metric space $K$ with the metric induced from $\R^n$. 
From \cite{S1} it immediately follows that the intersection of countably many sets $\alpha$-winning on $K$ is 
also $\alpha$-winning on $K$.
We will say that $S$ is {\sl winning on $K$\/} if it is {$\alpha$-winning on $K$} for some 
$\alpha > 0$.  Precise definitions are given in  \S \ref{Schmidt}.
As a trivial consequence of Corollary \ref{minusone}, if $S$ is winning on 
%D
%an absolutely
%decaying set $K$
$K =  \supp\,\mu$, where $\mu$ is absolutely decaying, then $S\cap K$ is %uncountable; 
%D
%in fact it is 
not
contained in a countable union of affine hyperplanes.
%D
Furthermore, under some additional assumptions on $\mu$,  for example when $K = \R^n$
 or one of the self-similar sets mentioned above, one can show that the \hd\ 
of $S\cap \,K$ is equal to $\dim(K)$ whenever  $S$  is winning on $K$. 
% open set $U$ intersecting $K$
%\comdima{Is this OK? I had trouble saying this in words..} 
%$$%\textstyle
%{\inf_{ \substack{S\text{  winning on }K,\\U\subset \R^n \text{ open with } U\cap K \ne \varnothing}}\ \ \dim(S\cap \,K\,\cap U )> 0}\ $$
% for any  $S$  winning on $K$ and any 
% open set $U$ intersecting $K$, 
%that 
% for any    absolutely decaying $\mu$ there exists $\gamma > 0$ such that 
% the \hd\ of $S\cap \,K\,\cap U$ is uniformly bounded from below%under the stronger absolute friendliness condition on $K$ there exists $\gamma > 0$ such that 
% the \hd\ of $S\cap K \cap U$ is not less than  $\gamma$ whenever $S$ is winning on $K$ 
%by a positive number independent of $U$ or $S$. 
%. Furthermore, %one can often take $\gamma$ 
 %the above bound is often 
 %equal to 
 %D
 %the \hd\ of 
 %$\dim(K)$, for example when $K = \R^n$ or one of the self-similar sets mentioned above. 
See  \S\ref{measures} for precise statements.

%D
In this paper we prove a generalization of Theorem \ref{Daniold}:

\begin{thm}
\label{Daninew}
For every 
%D
%absolutely decaying $K \subset\R^n$ 
 $K \subset\R^n$ which  supports an  absolutely decaying measure 
 there exists %a positive 
 $\alpha = \alpha(K) > 0$ 
such that %whenever $K$ is the support of a $(C,\gamma,D)$-measure on $\rn$,
for any $M\in \GL_n(\R) \cap \M_{n\times n}(\Z)$ and  any $y\in\T^n$, % and any $0 <  \alpha < \alpha_0$, 
the set
$\tilde E(M,y)$ is $\alpha$-winning on $K$.
\end{thm}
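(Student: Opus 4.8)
The plan is to play Schmidt's game directly on the metric space $K=\supp\mu$, where $\mu$ is a $(C,\gamma)$-absolutely decaying measure with $\supp\mu=K$, and to exhibit a strategy for Alice whose resulting limit point $\x_\infty$ has forward orbit avoiding a fixed ball $B(y,\delta_0)\subset\T^n$, so that $\x_\infty\in\tilde E(M,y)$. Since $M$ is an invertible integer matrix, $|\det M|\ge 1$, so its eigenvalues cannot all lie strictly inside the unit circle; by Kronecker's theorem this forces a clean dichotomy: either the spectral radius satisfies $\Lambda(M)>1$, or every eigenvalue is a root of unity and $M$ is quasiunipotent. I would treat these two regimes by separate arguments and combine them using the countable-intersection stability of winning sets on $K$ noted above. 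The constant $\alpha=\alpha(K)$ is to be extracted from $C$, $\gamma$ and $n$ alone, independently of $M$ and $y$, by arranging that each of Alice's moves need only dodge a universally bounded number of thin hyperplane-neighborhoods.

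For the expanding regime $\Lambda(M)>1$, fix a small margin $\delta_0$. For a time $j$ the set of $\x$ with $\dist(\pi(M^j\x),y)<\delta_0$ is $M^{-j}\bigl(\mathcal{N}(y+\Z^n,\delta_0)\bigr)$, a union of ellipsoids centred at the lattice $M^{-j}(y+\Z^n)$ with semi-axes $\delta_0/\sigma_i(M^j)$. The key geometric observation is that when $M^j$ is applied to a ball $B$ of radius $\rho$, the image is an ellipsoid all of whose axes are at most $\sigma_1(M^j)\rho$, so once $\sigma_1(M^j)\rho$ is of unit order the image meets only $O_n(1)$ cosets of $\Z^n$, and consequently the bad set inside $B$ lies in the union of $O_n(1)$ neighborhoods of affine hyperplanes, each of relative width $O(\delta_0)$. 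Passing to a fine enough subscale on which each small ball meets at most one such hyperplane, and invoking the decay estimate on each subscale together with the doubling property these measures enjoy, yields a bound $\mu(\{\text{bad}\}\cap B)\le C'\delta_0^{\gamma}\mu(B)$ with $C'$ depending only on $C,\gamma,n$ and crucially \emph{not} on how far $M^j$ has expanded; this is what makes the estimate survive Bob's choice of an arbitrarily small $\beta$. Choosing $\delta_0$, and then $\alpha$, small in terms of $C',\gamma,n$, Alice can always find $A\subset B$ of radius $\alpha\rho$ centred on $K$ and disjoint from the bad set. Because $\sigma_1(M^j)$ grows geometrically, each integer time becomes critical at its own stage of the game, all sufficiently large times are blocked with the single margin $\delta_0$, and the finitely many remaining times are handled by a final dodge of the countable sets $M^{-j}(y+\Z^n)$.

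For the quasiunipotent regime I would first replace $M$ by a power $M^r$ that is unipotent, so that each orbit $(M^r)^k\x$ is a polynomial in $k$ modulo $\Z^n$; the original forward orbit is then a finite union of such polynomial orbits. Here the scale-based blocking above has no traction, since $\sigma_1(M^k)$ grows only polynomially and the image of a small ball never reaches unit scale. Instead I would argue algebraically, using the structure of orbit closures of polynomial sequences on the torus, that the set of $\x$ for which $y$ lies in the closure of the forward orbit is contained in a countable union of proper rational affine subspaces of $\R^n$ (as is already visible for a single shear, where the obstruction is the countable family of hyperplanes on which the neutral coordinate is congruent to that of $y$). Since each such subspace has positive codimension it is contained in an affine hyperplane, so $\tilde E(M,y)$ contains the complement of a countable union of hyperplanes, which is $\alpha$-winning on $K$ by dodging one hyperplane per stage via the decay property, again with $\alpha$ depending only on $K$.

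The step I expect to be the main obstacle---and the reason Dani's hypotheses of semisimplicity and rationality cannot simply be deleted from his argument---is the quasiunipotent case: one must show, for an arbitrary unipotent integer matrix and an arbitrary $y\in\T^n$, that the set of starting points whose forward orbit accumulates at $y$ is genuinely lower-dimensional, i.e.\ contained in a countable union of proper affine subspaces, and do so uniformly enough to interleave the resulting dodges with the game. The secondary difficulty, already flagged above, is proving the expanding-case decay estimate with a constant independent of the expansion factor, so that a single $\alpha$ works for every $\beta$; this is exactly where the finer-scale covering and the doubling property of the measure are essential.
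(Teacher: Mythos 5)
Your overall architecture coincides with the paper's: split according to whether the spectral radius exceeds $1$, use Kronecker's theorem to conclude that otherwise all eigenvalues are roots of unity so some power $M^N$ is unipotent, treat the expanding case by a lacunary hyperplane-dodging argument, and treat the unipotent case by showing the exceptional set lies in countably many hyperplanes. The expanding half is essentially the paper's route (there it is obtained by verifying the hypotheses of Theorem \ref{main theorem}, namely that $(\|M^k\|_{op})$ is a finite union of lacunary sequences), but note two points. First, your "grows geometrically" assertion is a real step: the sequence $(\|M^k\|_{op})$ is only \emph{eventually} lacunary, and proving even that requires the Jordan-block computation $\|J^{k+1}\|_{op}/\|J^k\|_{op}\to\lambda_{\max}>1$. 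Second, you invoke the doubling property, which an absolutely decaying measure on $\R^n$ with $n>1$ need not have; the paper's Lemma \ref{log turns} deliberately avoids it by an averaging argument at a single scale, and using doubling would weaken the theorem from absolutely decaying to absolutely friendly supports.

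The genuine gap is the quasiunipotent case, which you correctly identify as the crux and then leave open. The missing idea is short and hinges on integrality, which your sketch never uses: since $M^N$ is a unipotent \emph{integer} matrix, its Jordan basis $\vbf_1,\dots,\vbf_n$ may be taken in $\Q^n$. Then $V=\mathrm{span}(\vbf_1,\dots,\vbf_{n-1})$ is a rational hyperplane, so $V+\Z^n$ is a countable union of positively separated parallel hyperplanes, and $M^N$ preserves each affine translate $a_n\vbf_n+V$ because the last Jordan coordinate is fixed. Hence $d(M^{Nk}\x,\y+\Z^n)\ge c_0\,d(\x-\y,V+\Z^n)$ for all $k$, so the set of bad starting points is contained in $\y+V+\Z^n$ and $\tilde E(M^N,y)$ contains the complement of a countable union of hyperplanes; Corollary \ref{minusone} together with the countable-intersection property of $\alpha$-winning sets finishes this case, and your worry about interleaving the dodges with the game evaporates, since each hyperplane is dodged once at a sufficiently small scale. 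No structure theory of polynomial orbit closures is needed. Finally, to descend from $M^N$ to $M$ you should write $\tilde E(M,y)=\bigcap_{i=0}^{N-1}\bigcap_{z\in f_M^{-i}(y)}\tilde E(M^N,z)$ and intersect over the finitely many preimages, rather than pulling winning sets back under $M^i$, which is not a licensed operation.
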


In particular, for any {\it countable\/} subset $Y$ of $\T^n$, 
%D
the set
$$
\bigcap_{y\in Y}\bigcap_{M\in \GL_n(\R) \cap \M_{n\times n}(\Z)}\tilde E(M,y)
$$
%intersects with every $K$ as in Theorem \ref{Daninew} in a set of positive (and often full) \hd. 
is also $\alpha$-winning on $K$.
It immediately
follows that sets   $ E(f_M,y)$ discussed in  Theorem \ref{Daniold} and their countable intersections
always intersect %`sufficiently regular' 
those subsets of the torus %(where the latter means that their 
whose pullbacks to $\R^n$ support absolutely decaying measures. It can also be shown that $\alpha(\R^n) = 1/2$,
%\comdima{Need a remark about it at the end of the paper.}\comliorryan{see remark before main lemma}, 
recovering Dani's result, see \S\ref{rn}. 

The one-dimensional case  of  Theorem \ref{Daninew}
%that is, a result describing orbits 
%of maps $$f_b: \T\to\T,\quad x\mapsto bx \bmod 1\,,$$ where $b$ is an integer, 
appeared recently
  in \cite{BBFKW}, and also,
  %\footnote{Earlier in \cite{T1} it was proved that sets $E(f_b,y)$ are $\alpha$-winning without
%a bound on $\alpha$ uniform in $b$; see \cite{BBFKW} for other references.}, 
independently and  for $K = \R$, in \cite{Fae}; see also \cite{T1}.
 In other words, the sets
\eq{defeby}{
\tilde E(b,y) \df \left\{x\in\R : y\notin \overline{\{\pi(b^k x): k\in\N\}}  \right\}\,.
}
were shown to be winning
%D 
%on $K$ for any absolutely decaying $K\subset \R$, 
on $\supp\,\mu$ for any absolutely decaying measure $\mu$ on $ \R$,
 any  integer $b > 1$ 
and any $y\in\T$. 
 However, the main result of \cite{BBFKW} applies to much more general situations, recovering
  %Namely, $b$ in \equ{defeby} does not have to be an integer, and   one can replace the sequence
%of powers of 
%$b$ by an arbitrary lacunary sequence $t_k$ of real numbers; see also  
earlier work \cite{Ma, P1, P2} 
by %A.\ 
Pollington and %B.\ 
de Mathan. 
%D
In particular, $b$ in \equ{defeby} does not have to be an integer, and   one can replace the sequence
of powers of 
$b$ by an arbitrary lacunary sequence $t_k$ of real numbers
(we recall that $(t_k)$ is called {\sl lacunary\/}  if
$\inf_{k\in \N}\frac{t_{k+1}}{t_k} > 1$.) 
%Furthermore, instead of $y\in\T$ in \equ{defeby} one can 
%consider  an arbitrary sequence
%of points  $(y_k)$, and the condition $y\notin \overline{\{\pi(b^k x)\}}$
%can be replaced by $$\inf_{k\in \N} d\big(\pi(t_kx) , y_k\big) > 0\iff \inf_{k\in \N} d\big(t_kx  , \pi^{-1}(y_k)\big) > 0$$   

\smallskip
We now describe an analogous generalization of Theorem \ref{Daninew}, which is the main result of 
the present paper. We are going to fix $m,n\in\N$,  consider a sequence
$\cm = (M_k)$ of  $m \times n$ matrices  and a sequence  $\ca = (Z_k)$ of subsets of $\R^m$, and define
\eq{defema}{
\tilde E(\cm,\ca) \df \{\x\in\R^n : \inf_{k\in\N} d(M_k\x ,Z_k) > 0 %\text{ for all } j\in\N  
 \}\,.
}
(Here %and hereafter 
$d(\cdot,\cdot)$ stands for  the Euclidean distance
on $\R^n$.)
%, as well as for  the induced distance on $\T^n$.) 
The sets $\tilde E(M,y)$ defined in \equ{defeay} constitute a special case, 
with $m = n$, $\cm = (M^k)$ and $Z_k =  \pi^{-1}(y)$. 

Some assumptions on $\cm$ and $\ca$ are in order. 
We will say that a sequence $\cm$ of nonzero 
$m\times n$ matrices is {\sl lacunary\/}
if so is the sequence $ (\|M_k\|_{op})$ of the values of their operator norms. A subset $Z$ of $\rn$ will be called 
\textsl{$\delta$-uniformly discrete\/} if 
%there exists $\delta>0$
%such that for any two distinct points $\x ,\y\in Z$, one has 
$\inf_{\x ,\y\in Z,\,x\ne y}d(\x ,\y)>\delta$.  With some abuse of terminology, we %are going to consider sequences
%$\ca = (Z_k)$ of subsets of $\R^n$, and 
say that a sequence $\ca= (Z_k)$ is $\delta$-uniformly discrete if $Z_k$ is $\delta$-uniformly discrete for every $k\in\N$, and that $\ca$ is  \textsl{uniformly discrete} if it is $\delta$-uniformly discrete for some $\delta > 0$. For example, for an arbitrary sequence $(y_k)$ of points of $\T^m$, 
the sequence of sets $Z_k = \pi^{-1}(y_k)\subset \R^m$ is $1$-uniformly discrete.

\ignore{
We will show that with certain restrictions on $\cm$ and $\ca$, these sets are
winning on supports of absolutely friendly measures, generalizing
%(see \S{\ref{Schmidt}} and \S {\ref{measures}})
several known results (e.g. \cite{D} and \cite{BBFKW}).
It is clear that for the set $\tilde E(\cm,\ca)$ to be large, 
the points in $Z_j$ need to be positively separated; 
for example, if $Z_j$ is dense for some $j$ then clearly $\tilde E(\cm,\ca) = \O$.
Recall that a subset $Z\subset\rn$ is said to be \textsl{uniformly discrete} if there exists $\delta>0$
such that for any two distinct points $\x ,\y\in Z$, one has $d(\x ,\y)>\delta$. 
We will say that such a set is $\delta$-uniformly discrete.
If $Z_j$ is $\delta_j$-uniformly discrete for every $j\in\N$ and $\inf\delta_j = \delta>0$,
we call the sequence $\ca =(Z_j)$
$\delta$-uniformly discrete. 
We call $\ca$ uniformly discrete if it is $\delta$-uniformly discrete for some $\delta > 0$.
We note that trivially any sequence of translates
of $\zn$ is uniformly discrete. This property turns out to be a sufficient restriction
on $\ca$ for our purposes.

For matrices, the crucial property is that their
maximal expansion of vectors grows very quickly in $j$.
More precisely, recall that a real sequence of reals $(\lambda_j)$ is called \textsl{lacunary} if 
$\inf_{j\in \N}\frac{\lambda^{j+1}}{\lambda^j} > 1$,
and call a sequence of matrices lacunary \comlior{values?}
if their operator norms form a lacunary sequence.\\\

Using the above notation}
\medskip

We can now formulate our main result, which is proved in  \S\ref{proofs}:

\begin{thm}
\label{main theorem}
%For any $C, \gamma, D > 0$, 
%there exists $\alpha = \alpha(C,\gamma, D) \in (0,1)$ such
%that if $K$ is the support of a $(C,\gamma,D)$-absolutely friendly measure on
%$\rmb$, 
For every 
%D
$K \subset\R^n$ which supports an absolutely decaying measure
%\footnote{In 
%the earlier version of this paper
%this theorem was proved under a more restrictive assumption on  $\mu$, which we were able to remove 
%utilizing an argument from \cite{ET}.} 
\comdima{Not sure whether to have a footnote here or next to the lemma in \S 3, but the impact of \cite{ET} should be mentioned somehow.. what do you think?} 
%absolutely decaying $K \subset\R^n$ 
there exists a positive $\alpha = \alpha(K)$ 
such that if
$\ca$ is a uniformly discrete sequence of subsets of $\R^m$ 
and $\cm$ is a lacunary sequence of $m\times n$ matrices with real entries, then 
$\tilde E(\cm,\ca)$ is $\alpha$-winning on $K$.
\end{thm}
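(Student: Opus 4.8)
The plan is to play Schmidt's game on the metric space $K=\supp\mu$ and to produce, through Alice's strategy, a limit point $\x$ for which $\inf_k d(M_k\x,Z_k)>0$. The engine of the strategy is the absolute decay of $\mu$: whenever Bob has presented a ball $B=B(x_0,r)$ centered on $K$, the decay estimate bounds the $\mu$-measure of the $(\epsilon r)$-neighborhood of any affine hyperplane by $C\epsilon^\gamma\mu(B)$, so that for $\alpha$ small enough (depending only on $C,\gamma$, hence only on $K$) Alice can always select a sub-ball $A\subset B$ of radius $\alpha r$, still centered on $K$, whose center lies at distance $\ge c\,r$ from a prescribed hyperplane. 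Thus Alice has a cheap move that ``clears'' a neighborhood of one affine hyperplane at the current scale; this is precisely the property for which the absolutely decaying hypothesis is designed, and it is essentially the only place $K$ enters.

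Next I would reduce each constraint $d(M_k\x,Z_k)>0$ to such a hyperplane avoidance using the singular value decomposition of $M_k$. Let $\sigma_k=\|M_k\|_{op}$ and let $v_k,u_k$ be top right/left singular unit vectors, so that $\langle M_k\x,u_k\rangle=\sigma_k\langle\x,v_k\rangle$. If the game has reached a ball of radius $r$ with $2\sigma_k r<\delta$, where $\delta$ is the uniform discreteness constant of $\ca$, then $M_kB$ has diameter $<\delta$ and therefore meets $Z_k$ in at most one point $z$; keeping $\langle M_k\x,u_k\rangle$ at distance $\ge c$ from $\langle z,u_k\rangle$ then forces $d(M_k\x,Z_k)\ge\min(c,\delta-\diam M_kB)$. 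Since $\langle M_k\x,u_k\rangle=\sigma_k\langle\x,v_k\rangle$, this is exactly the instruction to clear the $(c/\sigma_k)$-neighborhood of the single hyperplane $\{\langle\x,v_k\rangle=\langle z,u_k\rangle/\sigma_k\}$, whose relative width $c/(\sigma_k r)$ is a fixed constant when $\sigma_k r$ is comparable to $\delta$. So each matrix, met at the scale $r\approx\delta/\sigma_k$, is disposed of by one application of the move above, and—because later balls are nested—the clearance persists for the rest of the game.

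It remains to schedule these responses, and this is where lacunarity is essential and where the main difficulty lies. Since $(\|M_k\|_{op})$ is lacunary with ratio $\lambda>1$, any multiplicative window of width $\lambda$ contains at most one index, so at most one matrix is ``primary'' (has $\sigma_k r$ comparable to $\delta$) at each scale $r$; Alice answers that matrix and otherwise passes, nesting a concentric ball. The delicate point is that the game contracts scale by $\alpha\beta$ per round with $\beta$ adversarial, so for small $\beta$ the contraction overshoots a matrix's window: between consecutive rounds many matrices cross their critical scale at once, and no single move can clear all the corresponding hyperplanes (a relative-$\alpha$ ball cannot avoid arbitrarily many neighborhoods). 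The main obstacle is therefore to show that a single rapidly contracting Bob move is self-limiting. I would take $\alpha$ small relative to $\lambda$ and argue that the matrices swept up in one such move are each met at a slightly finer scale, where the image still contains at most one relevant point of $Z_k$, so that the proximity they can accumulate is bounded in terms of $\beta$ alone; this produces an escape distance $c=c(\beta)>0$ uniform in $k$, whence $\inf_k d(M_k\x,Z_k)\ge c(\beta)>0$ and $\x\in\tilde E(\cm,\ca)$. Making this self-limiting estimate precise—reconciling the adversarial game clock with the lacunary clock of the matrices while keeping the escape uniformly positive—is the technical heart of the argument, and the non-square, possibly singular or ill-conditioned matrices are absorbed into the same scheme because only the top singular direction $v_k$ enters the reduction.
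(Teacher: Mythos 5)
Your reduction of each constraint to a hyperplane--avoidance problem is exactly the paper's: the top singular direction of $M_k$ gives a hyperplane $W=M_k^{-1}(V)$ with $M_k^{-1}\big(B(\y,c)\big)\subset\cl^{(c/t_k)}$, and uniform discreteness guarantees that at the critical scale the ball meets at most one such preimage. You have also correctly located the difficulty: Bob's adversarial $\beta$ means that in a single round of the game many matrices can cross their critical scale at once, and one move cannot clear arbitrarily many hyperplane neighborhoods. But the resolution you sketch does not close the gap. First, you propose to ``take $\alpha$ small relative to $\lambda$''; this is not permitted, since $\alpha=\alpha(K)$ must be fixed before the lacunary sequence (hence its ratio $Q$) is known --- only the \emph{strategy}, not $\alpha$, may depend on $Q$ and $\beta$. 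Second, ``the proximity they can accumulate is bounded in terms of $\beta$ alone'' is not the right kind of statement: you need a uniform positive lower bound on $d(M_k\x,Z_k)$, which requires actively avoiding every one of the hyperplanes, not bounding the damage. With your tool as stated --- one hyperplane cleared per move --- the backlog of unaddressed hyperplanes grows by roughly $\log_Q\big(1/(\alpha\beta)\big)$ per round and is never exhausted.

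The missing idea is the quantitative form of the decay lemma (Lemma \ref{log turns}): in a single move Alice can place her ball at distance $\alpha\rho$ from a fixed positive \emph{proportion} $\varepsilon=\varepsilon(C,\gamma,\alpha)$ of any $N$ given hyperplanes, where $\varepsilon$ does not depend on $N$. This is proved by an averaging argument: the function $f=\sum_i\chi_{A_i}$, with $A_i=B(\x_1,(1-\alpha)\rho)\ssm\cl_i^{(2\alpha\rho)}$, has $\mu$-average at least $\varepsilon N$, so some point of $K$ lies in at least $\lceil\varepsilon N\rceil$ of the $A_i$. Consequently $r\approx\log_{1/(1-\varepsilon)}N$ moves clear all $N$ hyperplanes, while lacunarity guarantees that a block of $r$ consecutive rounds (a multiplicative window of width $(\alpha\beta)^{-r}\le Q^N$) produces at most $N$ new critical matrices; since $r$ grows only logarithmically in $N$ while the window capacity grows linearly, one can choose $N$ so that $(1-\varepsilon)^rN<1$ and the bookkeeping closes. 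The constant $c$ must then be taken of order $\rho(\alpha\beta)^{2r-1}$ so that the neighborhoods $\cl^{(c/t_k)}$ identified at the start of a block remain thin relative to the ball radius throughout the $r$ rounds used to avoid them. Without this proportion-per-move lemma and the block scheduling, the argument does not terminate.
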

%\comdima{Perhaps an implication about some countable intersection having full dimension?}

%\comdima{Maybe a separate statement about it?}

An important  special case  is $m = n$ and $\cm = (M^k)$, where $M$ is an $n\times n$ matrix  with spectral radius strictly greater than $1$ (not necessarily invertible and not necessarily with integer entries);
this is used  to derive Theorem \ref{Daninew} from Theorem \ref{main theorem}, see  \S\ref{proofs}.
%In fact we are going to prove a more precise result (see Lemma \ref{log turns} and Theorem \ref{precise theorem}).
%showing how $\alpha$ in depends on 
%D
%the constants
%$C$ and $\gamma$.
%involved in the definition of absolutely  decaying sets and 
%measures. 
Our main theorem also generalizes results from  \cite{BHKV} and  \cite{M} 
%studying another 
dealing with a special case  where \eq{1dimcase}{
\begin{aligned}
\cm\text{ is a lacunary sequence of }1\times n\text{ integer matrices}\\\text{ and }\ca = (Z_k), \text{ where }Z_k = \Z = \pi^{-1}(0)\ \forall\,k\in\N\,.\ \ %\qquad\qquad
\\
%$\text{ with integer coefficients,  } Z_k = \Z = \pi^{-1}(0)\ \forall\,k\,,\qquad\quad
\end{aligned}}
%Under these assumptions  it was proved in  \cite{BHKV} that $\dim \big(\tilde E(\cm,\ca)\cap K \big) = \dim (K)$ whenever  
%D
 %$K$ supports a measure satisfying certain conditions which are more restrictive than the absolute decay
 %$K$ satisfies certain conditions which are more restrictive than absolute decay 
 %(see 
 %D
 %a remark after Definition \ref{sets} 
 %\equ{not hard} for more detail). Also \cite[Lemma 1]{M} establishes that in the case \equ{1dimcase}
%sets $E(\cm,\ca)$ are 
%D
%$\frac12$-winning; we explain in  \S\ref{finalremarks} that the method of \cite{M} can be used to verify that in Theorem \ref{main theorem}   $\alpha(\R^n)$ can also be taken to be equal to $1/2$.
%see. Our proof for more general $K$ is significantly based on the ideas from \cite{M}.
%\smallskip
It was observed both in \cite{BHKV} and in \cite{M} that the latter %results in the case \equ{1dimcase}
set-up can be used to prove the abundance of badly approximable systems of affine forms.
%D
%; in fact this was the main application of those papers. 
Recall that a pair $(A,\x)$, 
%D
interpreted as a function $\q\mapsto A\q - \x$, $\R^m\to\R^n$ (here $A\in \M_{n\times m}(\R)$ and 
$\x\in\R^n$) is said to be {\sl \ba\/} if  
$$
\inf_{\q\in\Z^m\nz}\|\q\|^{m/n}d(A\q - \x,\zn) > 0\,. $$
This is an inhomogeneous analog of the notion of \ba\ systems of linear forms, see \cite{S2, S3}. 
It was proved in \cite{K2}
 that the set $\mathbf{Bad}(n,m)$ of \ba\ pairs $(A,\x)$ has full \hd. Then a much easier
proof %of this fact 
was found in \cite{BHKV}, where, for fixed $A\in \M_{n\times m}(\R)$, the sets
$$
\mathbf{Bad}_A(n,m) = \big\{ \x \in \rn :(A,\x)\in \mathbf{Bad}(n,m)\big\}
$$
were considered, and it was shown that $\dim\big(\mathbf{Bad}_A(n,m)\big) = n$ for any $A$.
%\in \M_{n\times m}(\R)$. 
The latter result was strengthened by Tseng in the case $m = n = 1$: he proved
\cite{T2} that $\mathbf{Bad}_a(1,1)\subset \R$ is 
%D
$\frac18$-winning for any $a\in \R$.
%D
 Shortly thereafter,
Moshchevitin concluded \cite{M} that the sets $\mathbf{Bad}_A(n,m)$ are $\frac12$-winning 
for any $m,n$ and any $A\in \M_{n\times m}(\R)$.
%Following 
%D Moshchevitin's 
%Following the approaches of  \cite{BHKV} and \cite{M}, o
Our main theorem can be used to deduce

\begin{cor}
\label{affine forms}
Le  $K \subset\R^n$ be  the support of an absolutely decaying measure, and let $\alpha$ be as in Theorem \ref{main theorem}.  Then  for any 
$A \in \M_{n\times m}(\R)$, $\mathbf{Bad}_A(n,m)$ is $\alpha$-winning on $K$.
\end{cor}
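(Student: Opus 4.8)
The plan is to deduce the corollary directly from Theorem \ref{main theorem} by exhibiting, for each fixed $A\in\M_{n\times m}(\R)$, a lacunary sequence $\cm=(\vec m_k)$ of integer row vectors together with the (trivially uniformly discrete) constant sequence $\ca=(\Z)$ for which
\[ \tilde E(\cm,\ca)=\{\x\in\rn:\inf_k d(\vec m_k\cdot\x,\Z)>0\}\subseteq\mathbf{Bad}_A(n,m). \]
This is exactly the special set-up \equ{1dimcase}, with target $Z_k=\Z$ in $\R$. Since the winning property on $K$ is inherited by supersets — a strategy keeping Alice inside $\tilde E(\cm,\ca)$ keeps her inside any larger set — the displayed inclusion, combined with Theorem \ref{main theorem} (applicable because $\ca$ is $1$-uniformly discrete and $\cm$ is lacunary), immediately yields that $\mathbf{Bad}_A(n,m)$ is $\alpha$-winning on $K$. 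Thus the whole problem reduces to producing the vectors $\vec m_k$.

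To construct them I would use the approximation properties of $A$ in dual form. A pigeonhole (Dirichlet/Minkowski) argument applied to the $nm$ entries of $A$ produces an infinite family of integer row vectors $\vec m$ for which $\vec m A$ lies very close to $\zm$; writing $\vec m A=\vs+\boldsymbol\eta$ with $\vs\in\zm$, one controls $\|\boldsymbol\eta\|$ in terms of $\|\vec m\|$, and passing to a subsequence whose norms $\|\vec m_k\|$ grow geometrically makes $\cm$ lacunary. The point of this choice is an elementary identity: if $\x$ is well approximated by $A$, say $\x=A\q-\vp-\boldsymbol\vre$ with $\q\in\zm\nz$, $\vp\in\zn$ and $\|\boldsymbol\vre\|$ small, then
\[ \vec m_k\cdot\x=(\vs_k\cdot\q-\vec m_k\cdot\vp)+\boldsymbol\eta_k\cdot\q-\vec m_k\cdot\boldsymbol\vre, \]
whose first term is an integer, so that $d(\vec m_k\cdot\x,\Z)\le\|\boldsymbol\eta_k\|\,\|\q\|+\|\vec m_k\|\,\|\boldsymbol\vre\|$. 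Hence a sufficiently good rational approximation of $\x$ by $A$ forces some $\vec m_k\cdot\x$ close to $\Z$, which is precisely the contrapositive of the desired inclusion.

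The hard part will be arranging the two error terms above to be simultaneously small at the \emph{critical} rate. For $\x\notin\mathbf{Bad}_A(n,m)$ one only has approximations with $\|\q\|^{m/n}\|\boldsymbol\vre\|$ arbitrarily small, so to make $\|\vec m_k\|\,\|\boldsymbol\vre\|$ small one is forced to take $\|\vec m_k\|\approx\|\q\|^{m/n}$; but then, since the dual pigeonhole bound only gives $\|\boldsymbol\eta_k\|\lesssim\|\vec m_k\|^{-n/m}$, the first term $\|\boldsymbol\eta_k\|\,\|\q\|$ is of size $O(1)$ rather than $o(1)$. Balancing these competing rates — choosing the lacunary scale of the $\vec m_k$ and the quality of the approximation $\vec m_k A\approx\zm$ so that, for every sufficiently good approximation of $\x$, the matched index $k$ drives \emph{both} terms to zero — is the genuine number-theoretic content of the reduction, and is exactly the delicate step carried out in \cite{M} and \cite{BHKV}. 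Once the inclusion $\tilde E(\cm,\ca)\subseteq\mathbf{Bad}_A(n,m)$ is secured, Theorem \ref{main theorem} finishes the proof.
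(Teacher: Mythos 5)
Your overall route is the same as the paper's for the main case: exhibit a lacunary sequence of integer row vectors $\y_k\in\Z^n$ with $\{\x\in\rn:\inf_k d(\y_k\cdot\x,\Z)>0\}\subset\mathbf{Bad}_A(n,m)$, observe that the constant sequence $(\Z)$ is $1$-uniformly discrete (so this is exactly the set-up \equ{1dimcase}), and invoke Theorem \ref{main theorem} together with monotonicity of the winning property under inclusion. Like the paper, you outsource the key inclusion to \cite{M} and \cite{BHKV}, and your diagnosis of why a plain Dirichlet pigeonhole argument stalls at the critical exponent is accurate: this is precisely why one needs the theory of best approximations of Cassels \cite{C} as sharpened by Bugeaud and Laurent \cite{BL}, rather than a two-line transference computation. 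That part of your write-up is consistent with the paper.

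There is, however, a genuine gap: you do not treat the degenerate (``rational'') case in which some nonzero $\ubf\in\Z^n$ satisfies $A^T\ubf\in\Z^m$, equivalently $\rank(A^T\Z^n+\Z^m)<m+n$. The result you cite from \cite{BHKV} is proved only under the full-rank hypothesis; in the degenerate case the sequence of best approximations to $A$ terminates after finitely many terms, so the infinite lacunary sequence your argument requires is not produced by that machinery, and your balancing argument cannot even be set up (the dual error vanishes identically along integer multiples of $\ubf$ while giving no control in other directions). The paper disposes of this case separately and by a different mechanism: by Kronecker's theorem on inhomogeneous approximation \cite[Ch.~III, Theorem~IV]{C}, one has $\inf_{\q\in\Z^m}d(A\q-\x,\Z^n)>0$ whenever $\ubf\cdot\x\notin\Z$, so $\mathbf{Bad}_A(n,m)$ contains the complement of the countable union of hyperplanes $\{\x:\ubf\cdot\x\in\Z\}$ and is therefore $\alpha$-winning on $K$ by Corollary \ref{minusone} --- a step that does not follow from Theorem \ref{main theorem} alone. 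Your proof needs this case (or an equivalent patch, e.g.\ the lacunary sequence $\y_k=2^k\ubf$ combined with Kronecker's theorem) to be complete.
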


Independently, in a recent preprint 
%D
\cite{ET} Einsiedler and Tseng provided another proof of this result, 
with a \comdima{I am not sure whose approach is more dynamical :)} 
smaller value of $\alpha$.
%using a dynamical approach.
%D
We derive Corollary \ref{affine forms} in  \S\ref{proofs}.
At the end of the paper a remark is made
 explaining how all  
 %D
 %the results of this paper 
 our results can be strengthened 
 to replace `winning' with `strong winning', a property introduced recently in \cite{Fae, FPS, Mc}.
 %
% and then describe several additional results\comdima{I am sure that there will be some.} 
 %in the last section of the paper. 
\ignore{

We prove this theorem in \S\ref{proofs}

This theorem allows us to deduce two corollaries.
The first generalizes\footnote{In fact, semisimple endomorphisms of tori
were considered in \cite{D} and this result was stated under these conditions. As noted there,
the proof in the case of a semisimple, invertible, integer matrix
with no eigenvalue larger than $1$ in absolute value
is straightforward. This holds when playing on $K$.} \cite{D},
in which S. Dani proves that, for a semisimple $M \in GL(n,\Q) \cap M_{n\times n}(\Z)$ 
and $\y \in \Q$, the set
$\tilde E((M^j),\y+\zn)$ is winning on $\rn$. We prove the following.

\begin{cor}
\label{Dani}
For each $C, \gamma, D> 0$, there exists $\alpha= \alpha(C,\gamma,D) >0$
such that if $K$ is the support of a $(C,\gamma,D)$-absolutely friendly measure on $\rn$,
$\ca$ is uniformly discrete and
$M \in GL(n,\R)$ has at least
one eigenvalue having absolute value strictly greater than $1$, then
$\tilde E((M^j),\ca)$ is $\alpha$-winning on $K$.
\end{cor}

The second corollary improves a result in \cite{BHKV},  
where it is proved that $\mathbf{Bad}_M(n,m)$ 
has full Hausdorff dimension in certain regular fractals.
Here, and throughout the paper,
$$
\mathbf{Bad}_M(n,m) = \big\{ \y \in \rn : \exists c%=c(M,\y)
	\text{ s.t. } \|\q\|^{m/n}d(M\q - \y,\zn) > c\ 
	\forall \q\in\zn\setminus\mathbf\{{0}\}\big\}.
$$

We shall utilize a fact proved in \cite{BHKV} to deduce the following 
corollary\footnote{Alternatively, one could follow N. Moshchevitin's observation in \cite{M},  and deduce this result from a classical argument of W. Cassels (see \cite[Ch.5, \S 6]{C}).}.

\begin{cor}
\label{affine forms}
For any $C, \gamma, D > 0$, there exists $\alpha = \alpha(C,\gamma,D) > 0$
such that if $K$ is the support of a $(C,\gamma,D)$-absolutely friendly measure on $\rn$ and
$M \in \text{Mat}_{n\times m}(\R)$, then $\mathbf{Bad}_M(n,m)$ is $\alpha$-winning on $K$.
\end{cor}

This result was proved in the case $n=m=1$ and $K=\R$ by J. Tseng \cite{T1}.

\ignore{
Following a classical argument of W.J.S. Cassels \cite{C}, we
obtain as a corollary of our main theorem that $\mathbf{Bad}_M(n,m) \cap K$
is a winning set. Specifically, we prove

The connection between these dynamical and diophantine results was first
noted by N.G. Moshchevitin in \cite{M}, which motivated our proof.
}
}

\smallskip

{\bf Acknowledgements:}
The authors are grateful to Manfred Einsiedler, Nikolay Moshchevitin  and Barak Weiss for helpful discussions, and to the referee for useful comments. 
D.K.\  was supported in part by  NSF
Grant DMS-0801064.

\section{Schmidt's game}
\label{Schmidt}
 In this section we describe
the game, first 
introduced by  Schmidt in \cite{S1}.
Let $(X,d)$ be a complete metric space.
Consider  $\Omega \df X \times \mathbb{R}_+$, and define a 
partial ordering
\begin{center}
$(x_2,\rho_2)\le_{s}(x_1,\rho_1)$\  if \  $\rho_2+d(x_1,x_2)\le \rho_1$.
\end{center} 
We associate to each pair $(x,\rho)$ a ball in $(X,d)$ via
%D
$$B(x,\rho) = \{x'\in X: d(x,x') \le \rho\}\,.$$ %via the `ball' function $B(\cdot)$ as in \equ{balls}.
%\begin{center}
%$B_X(x,\rho)\df \{ y \in X : d(x,y) \le \rho\}$.
% \end{center}
% We will simply write $B(x,\rho)$ when the choice of the metric space is clear from the context.
Note that $(x_2,\rho_2)\le_{s}(x_1,\rho_1)$ 
%D
%clearly 
implies (but is not necessarily implied by)
$B(x_2,\rho_2) \subset B(x_1,\rho_1)$. However the two conditions are equivalent when
$X$ is a Euclidean space. 

%\smallskip

Schmidt's game is played by two players, whom we  will call
 %$\bold W$ and $\bold B$
Alice  and Bob, following a convention
%\footnote{Schmidt originally named his players White and Black; in the subsequent literature
%letters $A$ and $B$ were often used instead. A suggestion to personalize the players following
%a tradition coming from computer science is due to Andrei Zelevinsky {\sb (not sure if we need this footnote)}.}
 used previously in 
\cite{KW2, BBFKW}. 
%\comdima{For now I switched to Alice/Bob, if this is bad for some reason we can switch back.}
%\comliorryan{No problem. But the footnote is not needed.}%For brevity, and to coincide with Schmidt's original terminology, we will refer to them as Alice and Bob.
The two players are
equipped with parameters $\alpha$ and $\beta $ 
respectively, satisfying $0<\alpha ,\beta <1$. 
 Choose a subset ${S}$ of $ X$ (a target set).
The game starts with Bob picking $x_1\in X$ and $\rho > 0$, 
hence specifying a pair $\omega_1 = (x_1,\rho)$. Alice 
and Bob then take turns choosing $\omega'_k = (x'_k,\rho'_k)\le_s\omega_k$
and $\omega_{k+1}= (x_{k+1},\rho_{k+1})\le_s\omega'_k$ respectively satisfying 
\eq{balls}{\rho_k' = \alpha \rho_k\text{ and }\rho_{k+1} = \beta \rho_k'\,.}
%may now choose any point $x'_1\in X$ provided that 
%$\omega'_1\df (x'_1,\alpha \rho)\le_{s}\omega_1 $. %\comdima{Reindexed points, hope it is OK.}
%Next, Bob chooses a point $x_2\in X$ such that
%$\omega_2\df(x_2,\alpha \beta\rho)\le_{s}\omega'_1$. 
%Continuing in the same manner,   one obtains a  sequence 
%\begin{center}$
%$\omega_1 \ge_{s} \ldots\ge_{s}
%\ldots \omega_k = (x_k,\rho_k)\ge_{s}\omega'_k = (x'_k,\rho'_k)\ge_{s}\omega_{k+1}= (x_{k+1},\rho_{k+1})%\ge_{s} \ldots$
%\end{center}
%satisfying
As the game is played on a complete metric space
and the diameters of the nested  balls 
\begin{center}
$B(\omega_1) \supset  B(\omega_1') \supset  \ldots\supset 
B(\omega_k) \supset B(\omega'_k) \supset \ldots$
\end{center}
tend to zero as $k\rightarrow\infty$, 
the intersection of these balls 
is a point $x_\infty\in X$. Call Alice the winner if $x_\infty\in {S}$. 
Otherwise Bob is declared the winner. 
A strategy consists of specifications for a player's choices 
of centers for his or her balls 
given the opponent's previous moves. 

If for certain $\alpha$, $\beta$ and a target set ${S}$ 
Alice has a winning strategy, i.e., 
a strategy for winning the game regardless of how well Bob plays,
we say that ${S}$ is an 
{\sl $(\alpha , \beta)$-winning set\/}.
If ${S}$ and $\alpha$ are such that ${S}$ is an $(\alpha , \beta)$-winning set 
for all possible $\beta$'s, we say that ${S}$ is an 
{\sl $\alpha $-winning\/} set. 
Call a set {\sl winning\/} if such an $\alpha $ exists.

Intuitively one expects winning sets to be large. Indeed, every such set is clearly dense in $X$;
moreover, under some additional assumptions on the metric space winning sets can be proved to
have positive, and even full, \hd.
%(we discuss this in greater detail in \S\ref{measures}). 
For example, the fact that a winning subset of $\br^n$ has \hd\ $n$
is due to Schmidt  \cite[Corollary 2]{S1}. Another useful result of Schmidt \cite[Theorem 2]{S1} states that
the intersection of countably many $\alpha$-winning sets 
is $\alpha$-winning. 

Schmidt himself used the machinery of the game he invented to prove that certain
subsets of $\br$ or $\br^n$ are winning, and hence have full \hd. 
%For example, he showed
% \cite[Theorem 3]{S1} that $\bold{BA}$
%is $\alpha$-winning for any $0 < \alpha \le 1/2$.  
Now let $K$ be a closed subset of $X$.
Following an approach initially introduced in \cite{F}, 
we will say that a subset $S$ of $X$ is
{\sl $(\alpha , \beta)$-winning on $K$\/} (resp., {\sl $\alpha $-winning on $K$\/}, {\sl winning on $K$\/})
if $S\cap K$ is $(\alpha , \beta)$-winning  (resp., $\alpha $-winning,  winning) for Schmidt's
game played on the metric space $K$ with the metric induced from $(X,d)$. 
%For the rest of the paper we will take $X = \R$. 
%Note that  the partial ordering 
 %condition $(x_2,\rho_2)\le_{s}(x_1,\rho_1)$ for metric spaces $(K,d)$ is  equivalent to $B(x_2,\rho_2)\subset B(x_1,\rho_1)$, regardless of the choice of $K$.
In the present paper we let $X=\R^n$ and take $K$ to be the support of an absolutely decaying measure. 
%(defined in the next section).
In other words, since the metric is induced, playing the game on $K$ 
amounts to choosing balls in $\R^n$
according to the rules of a game played on $\R^n$, but with an additional constraint that the
centers of all the balls lie in $K$. Since the first appearance of this approach in \cite{F}, where it was used to
show that sufficiently regular fractals meet with a countable intersection of non-singular affine images of the set of badly approximable vectors in $\R^n$, it has been utilized in  \cite{F2, Kr}, and most recently in \cite{BBFKW}, of which the present paper is a sequel and a generalization.

\section{Absolutely decaying 
%D
%and absolutely friendly sets
measures}
\label{measures}

\ignore{Let us start with a general %and a very 
vague question. 
Suppose $\mathcal{P}$ is a certain
number-theoretic property of real numbers or vectors in $\br^n$ which holds on a sufficiently big set
(for example, on a set of full measure or on a dense set of full \hd). Given a rather small
%(say of zero  Lebesgue measure  or even of quite small \hd) 
subset $K$ of $\br^n$, when can one
guarantee that it contains at least one, or even quite a few, points with property $\mathcal{P}$?
A possible answer to this question turns out to depend on the existence of a nice measure supported
on $K$. It has been a recurring theme in metric Diophantine approximation over recent years to show that certain Diophantine properties
hold for $\mu$-almost all points, or for sufficiently big subset of the support of $\mu$, provided $\mu$ satisfies
certain axioms.
One example is provided by the theory of Diophantine approximation on manifolds, where it is shown that certain 
Lebesgue-generic Diophantine conditions happen to be generic with respect to volume measures on
smooth nondegenerate manifolds; see \cite{BD, KM} and references therein for details and history. 
%and \cite{dima pamq, dichotomy} for more recent results.

Another example %of a rather small subset supporting a nice measure 
is the middle third Cantor set. It was first proved in \cite{KW1} and, independently in \cite{KTV}, then reproved in \cite{F} using a simpler argument, that
the intersection of this set with the set %$\bold{BA}$  
of badly approximable numbers has full \hd; that is $\log 2/\log 3$. 
In all three aforementioned proofs 
the crucial role was played by the natural coin-flipping measure supported on the middle third Cantor set, whose decay properties had been also exploited earlier in \cite{Veech} and \cite{W}.\\
\ignore{
In this paper, we consider two subsets of $\rmb$ and show that their intersection with fractals
$K$ supporting absolutely friendly measures (see \S \ref{measures}) has positive dimension.
In particular, our results are multi-dimensional generalizations of the ones obtained in \cite{BBFKW}. \\\\
}}
%\comdima{We'll find another place to put that intro, I hope.}

%D
In this section 
%Here 
we describe in detail the class of absolutely decaying measures and discuss other related properties %of measures 
and their applications.
%D
%We first describe some classes of measures whose supports
%provide a hospitable playground for Schmidt's game.
%The classes of decaying and friendly measures were first introduced in \cite{KLW}, while the
%terms `absolutely decaying' and `absolutely friendly' were coined in \cite{PV} where stronger
%assumptions on the measures
%regarding the definition of friendly measures 
%were needed.
%Then, following \cite{KW2}, we discuss a lower bound on the dimension of winning
%sets on the supports of these measures.
%Finally, we prove two lemmas which highlight the benefits of playing Schmidt's
%game on such fractals.
%\begin{defn}
%\label{decay}
\comdima{Shortened the intro since the measures have already appeared before.}
Following a terminology introduced in \cite{KLW, PV}, say that a locally finite
Borel measure $\mu$ on $\rn$ is
{\sl $(C,\gamma)$-absolutely decaying\/} if there exists $\rho_{0} >0$ 
%and $D > 1$ 
such that 
\eq{ad}{\begin{aligned}
\mu\big(B(\x,\rho)\cap \mathcal{L}^{(\varepsilon)}\big) 
	< C(\varepsilon/\rho)^{\gamma}\mu\big(B(\x,\rho)\big)\ \\
	 \text{ for any  affine hyperplane }\mathcal{L}\subset \rn\quad \ \\  \text{
and any }\x\in \supp \, \mu,\  0 < \rho<\rho_0,
	\  \varepsilon > 0\,.
\end{aligned}}

Here $B(\x,\rho)$ stands for the closed Euclidean ball in $\R^n$ of radius $\rho$ centered at $\x$, 
and $\mathcal{L}^{(\varepsilon)} \df \{\x \in\rn : d(\x,\mathcal{L}) \leq \vre\}$ is the closed $\vre$-neighborhood of $\mathcal{L}$. 
We say that  $\mu$ is  {\sl absolutely decaying\/}
if it is $(C,\gamma)$-absolutely decaying for some $C,\gamma > 0$.  (This terminology differs slightly from the
one introduced in \cite{KLW}, where a less uniform version was considered.)
 If $\mu$ is  $(C,\gamma)$-absolutely decaying, we will denote by $\rho_{C,\gamma}(\mu)$ 
 the supremum of $\rho_0$ for which 
\equ{ad} holds.
\ignore{\eq{ad}{\begin{aligned}
\mu\big(B(\x,\rho)&\cap \mathcal{L}^{(\varepsilon\rho)}\big) 
	< C\varepsilon^{\gamma}\mu\big(B(\x,\rho)\big) \quad \text{ for any }\,\text{affine hyperplane }\mathcal{L}\subset \rn \\ & \text{
and any }\x\in \supp \, \mu,\  0 < \rho<\rho_0,
	\ 0 < \varepsilon < 1\,.
\end{aligned}}
(Here $\mathcal{L}^{(t)} \df \{\x \in\rn : d(\x,\mathcal{L}) \leq t\}$ is the closed $t$-neighborhood of $\mathcal{L}$.)
\end{defn}

\begin{defn}
\label{federer}
We say that $\mu$ is
{\sl $D$-Federer\/} if there exists $\rho_{0} >0$ 
%and $D > 1$ 
such that
\eq{fed}{\mu\big(B(\x ,2\rho)\big) < D\mu\big(B(\x,\rho)\big)\,\quad\forall\,\x\in \supp \, \mu,\ \forall\,0 < \rho<\rho_0\,.}
\end{defn}}

%D
Another property, which often comes in a package
%\comdima{How to say it better?} 
with absolute decay, is the so-called doubling, or Federer, condition. One says that  $\mu$ is
{\sl $D$-Federer\/} if there exists $\rho_{0} >0$ 
%and $D > 1$ 
such that
\eq{fed}{\mu\big(B(\x ,2\rho)\big) < D\mu\big(B(\x,\rho)\big)\,\quad\forall\,\x\in \supp \, \mu,\ \forall\,0 < \rho<\rho_0\,,}
and {\sl Federer\/} if it is $D$-Federer for some $D>0$. 
Measures which are both absolutely decaying and Federer are called {\sl absolutely friendly\/}, 
a term coined in \cite{PV}.
\ignore{A measure $\mu$ is called {\sl $(C,\gamma, D)$-absolutely friendly} if it is
both $(C,\gamma)$-absolutely decaying and $D$-Federer. 
We will say that $\mu$ is {\sl absolutely decaying\/} (resp.\  {\sl Federer\/}, {\sl absolutely friendly})
if it is $(C,\gamma)$-absolutely decaying (resp.\ $D$-Federer, 
$(C, \gamma, D)$-absolutely friendly)
for some values of constants $C,\gamma,D$. 
If $\mu$ is $(C,\gamma)$-absolutely decaying
(resp., $(C,\gamma, D)$-absolutely friendly), we will denote by $\rho_{C,\gamma}(\mu)$ 
(resp., $\rho_{C,\gamma, D}(\mu)$) the supremum of $\rho_0$ for which 
\equ{ad} holds (resp., both \equ{ad} and \equ{fed} hold).}
%property in Definition \ref{friendly} %\comdima{Is this clear?}\comliorryan{It is clear.} hold.
%\footnote{The second condition is usually referred to as the doubling or Federer property.
%See e.g.\ \cite{MU} for discussions and examples and.}

Many examples of absolutely friendly measures can be found in \cite{KLW, KW1, U2, SU}. The Federer
%(also called doubling) 
condition is very well studied; it obviously holds when  $\mu$  satisfies a {\sl power law\/}, i.e.\  there exist positive $\delta, c_1,c_2,\rho_0$
such that %for every $\x\in \supp \, \mu$ and $0 < \rho<\rho_0$ one has
\eq{pl}{c_1\rho ^{\delta}\leq\mu\big(B(\x,\rho)\big)\leq c_2\rho^{\delta}\,\quad\forall\,\x\in \supp \, \mu,\ \forall\,0 < \rho<\rho_0 \,.} Such measures are often referred to as {\sl $\delta$-Ahlfors regular\/}.
However it is not hard to construct absolutely friendly measures not satisfying a power law, see \cite{KW1}
for an example. Also, when $n = 1$ the Federer property is implied by the absolute decay, which in its turn is 
implied by a power law (see \cite{BBFKW} for a thorough discussion of  equivalent definitions of absolute friendliness in the one-dimensional case). However these implications fail to hold in higher dimensions.
In particular, the volume measures on smooth $k$-dimensional submanifolds of $\R^n$ 
obviously 
%D
%satisfy a $k$-power law 
are $k$-Ahlfors regular but   not absolutely decaying unless $k = n$. 

%Let us say that $\mu$
\ignore{
Clearly the $n$-dimensional Lebesgue measure is absolutely friendly as well as any 
Borel measure $\mu$ on $\rn$ satisfying a power law, i.e.\  there exist positive $\gamma, k_1,k_2,\rho_0$
such that for every $\x\in \supp \, \mu$ and $0 < \rho<\rho_0$ one has
$$k_1\rho ^{\gamma}\leq\mu\big(B(\x,\rho)\big)\leq k_2\rho^{\gamma}\,.\\
$$}

%D
The goal of the current work, as well as in several earlier papers \cite{KW1, KTV, F, F2, Kr}, is to use
measures in order to construct points in their supports with prescribed (dynamical or Diophantine) properties.
Our attention will therefore be focused on closed subsets $K$ of $\rn$ which support absolutely decaying and absolutely friendly measures.
\ignore{, and
it will be convenient to introduce the following  terminology:

\begin{defn}
\label{sets}
Let $K$ be a  closed subset of  $\rn$ and let $C, \gamma, D > 0$.
Say that $K$ is
{\sl $(C,\gamma)$-absolutely decaying\/} (resp., {\sl absolutely decaying\/})  if there exists a $(C,\gamma)$-absolutely decaying (resp., {absolutely decaying}) measure $\mu$ on $\R^n$ with $K = \supp\,\mu$. Similarly define {\sl $(C,\gamma, D)$-absolutely friendly\/} and {\sl absolutely friendly\/} sets.
\end{defn}}
%D
For example, this is the case when $K = \R^n$, or when $K$ is the limit set of an irreducible family of contracting self-similar \cite{KLW} or self-conformal \cite{U2} transformations of $\R^n$ satisfying the Open Set Condition. More examples can be found in \cite{KW1, SU}. Note that the paper \cite{BHKV}  established full \hd\ of $\tilde E(\cm,\ca)\cap K$ for $\cm,\ca$ as in 
\equ{1dimcase} and under an assumption that $K\subset \R^n$ supports an absolutely decaying, $\delta$-Ahlfors regular
measure with $\delta > n-1$. It is not hard to show, using an elementary covering argument, that 
%Yet it is worthwhile to point out\comdima{I might have seen it written down in one of Urbanski's papers.}
%that 
%\eq{not hard}{\text{
\equ{pl} with $\delta > n-1$ implies \equ{ad} with $\gamma = \delta - n+1$. %\,;}} 
Hence the sets considered 
in  \cite{BHKV} support absolutely decaying measures.

\smallskip

%D
%If $K\subset \R^n$ is \af, playing  Schmidt games on $K$ has many advantages. In particular, sets which are winning on $K$ must have positive \hd. 
Recall that the {\sl lower pointwise dimension\/} of a measure  $\mu$ at $\x\in\supp\,\mu$ is defined as
$$\underline{d}_\mu(\x) \df \liminf_{\rho\to 0} \frac{\log\mu(B(\x,\rho))}{\log \rho}.
$$
For an open $U$ with $\mu(U) > 0$ let
\eq{dmu}{\underline{d}_\mu(U) \df \inf_{\x\in \supp\,\mu\cap U}\ \underline{d}_\mu(\x) \,.}
It is well known, see e.g.\ \cite[Proposition 4.9]{Fa}, that \equ{dmu} constitutes a lower bound for the \hd\ of $\supp\,\mu\cap U$ (where this bound is sharp when 
$\mu$ satisfies a power law). It is also easy to see that
$\underline{d}_\mu(\x)\ge \gamma$ for every $\x\in \supp \, \mu$ 
%D
whenever $\mu$  is $(C,\gamma)$-absolutely decaying:  indeed, %let
%$\rho_0$ be as in Definition \ref{friendly} and 
take $\rho < \rho_0 < \rho_{C,\gamma}(\mu)$ and $\x\in \supp \, \mu$; then, %letting $\vre =\frac{\rho}{\rho_0}$ 
using \equ{ad} and noting that $B(\x,\rho)\subset\cl^{(\rho)}$ for some hyperplane $\cl$, one has
$$\mu\big(B(\x,\rho)\big) < C\left(\frac{\rho}{\rho_0}\right)^\gamma\mu\big ( B(\x,\rho_0)\big).$$
Thus, for $\rho < 1$,
$$
\frac{\log \mu\big(B(\x,\rho)\big)}{\log \rho} \ge \gamma 
+ \frac{\log C - \gamma \log \rho_0 + \log \mu\big(B(\x,\rho_0)\big)}{\log \rho},
$$
and the claim follows.

The following proposition %\comdima{Changed the wording a bit.}
 \cite[Proposition 5.1]{KW2} makes it possible to estimate
the \hd\ of sets winning on supports of Federer measures: 

\begin{prop} \label {dimension}Let $K$ be the support of a Federer measure $\mu$  on %a metric space $X$, 
$\R^n$, %\comdima{We haven't defined Federer for arbitrary metric spaces.}, 
and let $S$ be winning on $K$. Then for any open $U\subset \R^n$ 
with $\mu(U) > 0$ one has 
$$\dim(S\cap K\cap U) \ge
 \underline{d}_\mu(U)\,.$$
\end{prop}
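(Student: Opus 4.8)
\emph{Plan.} I would derive the dimension estimate from the winning property by constructing, inside $S\cap K\cap U$, a Cantor set that carries a measure $\nu$ satisfying a Frostman condition with exponent arbitrarily close to $\underline d_\mu(U)$, and then invoking the mass distribution principle. Fix any $s<\underline d_\mu(U)$; it suffices to produce a subset of $S\cap K\cap U$ of \hd\ at least $s-\theta$ for arbitrarily small $\theta>0$. Since $S$ is winning on $K$, fix $\alpha>0$ such that $S$ is $\alpha$-winning on $K$, together with a winning strategy for Alice; then $S$ is $(\alpha,\beta)$-winning on $K$ for \emph{every} $\beta\in(0,\tfrac12]$, and I keep $\beta$ as a free parameter to be sent to $0$ at the very end. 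After passing to a smaller open subset on which the local estimate is uniform (this does not lower $\underline d_\mu$ below $s$ and keeps the $\mu$-measure positive), I may assume there is $\rho_0>0$ with $B(x_0,2\rho_0)\subset U$ for some $x_0\in\supp\mu\cap U$, such that both the Federer inequality \equ{fed} and the bound $\mu\big(B(x,\rho)\big)\le\rho^{s}$ hold for all $x\in\supp\mu\cap U$ and $0<\rho\le 2\rho_0$ (the latter being exactly what $s<\underline d_\mu(U)$ and \equ{dmu} supply, once made uniform).

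The construction is a game tree. Bob opens with $B(x_0,\rho_0)$; at a node $w$ of level $k$ I have a Bob ball $B_w=B(x_w,\rho_k)$ with $\rho_k=(\alpha\beta)^k\rho_0$ and Alice's winning response $A_w=B(x_w',\alpha\rho_k)$, where $x_w'\in\supp\mu$. For the children I take a maximal $2\rho_{k+1}$-separated subset $\{y_i\}\subset\supp\mu\cap B\big(x_w',\alpha\rho_k(1-\beta)\big)$ and let Bob play the pairwise disjoint admissible balls $B_{w_i}=B(y_i,\rho_{k+1})$, $\rho_{k+1}=\alpha\beta\rho_k$. Every infinite branch is a legal play in which Alice follows her winning strategy, so its nested intersection is a single point of $S$; as all centers lie in the closed set $K$ and inside $B(x_0,\rho_0)\subset U$, the resulting compact Cantor set $\mathcal C$ lies in $S\cap K\cap U$. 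I distribute mass down the tree proportionally to $\mu$, setting $\nu(B_{w_i})=\nu(B_w)\,\mu(B_{w_i})\big/\sum_j\mu(B_{w_j})$. Maximality makes the balls $B(y_i,2\rho_{k+1})$ cover $\supp\mu\cap B\big(x_w',\alpha\rho_k(1-\beta)\big)$, so \equ{fed} yields $\sum_j\mu(B_{w_j})\ge c_0\,\mu(B_w)$ with $c_0=c_0(\alpha,D)\in(0,1]$ \emph{independent of $k$ and of $\beta$}. Telescoping along a branch gives $\nu(B_w)\le C\,c_0^{-k}\mu(B_w)$ at level $k$; writing $c_0^{-k}=(\rho_0/\rho_k)^{\theta}$ with $\theta=\log(1/c_0)/\log\!\big(1/(\alpha\beta)\big)$ and using $\mu(B_w)\le\rho_k^{s}$, I obtain $\nu(B_w)\le C'\rho_k^{\,s-\theta}$. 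The point is that $\theta\to0$ as $\beta\to0$, since $c_0$ stays fixed while $\log\!\big(1/(\alpha\beta)\big)\to\infty$.

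It remains to promote the cell bound to a Frostman bound for arbitrary small radii. Given $z\in\mathcal C$ and $\rho_{k+1}\le r<\rho_k$, the ball $B(z,r)$ meets only finitely many level-$(k+1)$ cells: these are pairwise disjoint balls of radius $\rho_{k+1}$, hence their centers are $2\rho_{k+1}$-separated and lie in $B(z,2\rho_k)$, so a packing argument based on \equ{fed} bounds their number by some $M=M(\alpha,\beta,D)$. Thus $\nu\big(B(z,r)\big)\le M\,C'\rho_{k+1}^{\,s-\theta}\le MC'\,r^{\,s-\theta}$, and the mass distribution principle gives $\dim(\mathcal C)\ge s-\theta$, whence $\dim(S\cap K\cap U)\ge s-\theta$. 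Sending $\beta\to0$ sends $\theta\to0$, so $\dim(S\cap K\cap U)\ge s$; finally letting $s\uparrow\underline d_\mu(U)$ finishes the argument. The hard part will be the Frostman verification, in two respects: (i) controlling the telescoped constant so that the exponent loss $\theta$ genuinely vanishes as $\beta\to0$ — this is precisely why mass is distributed proportionally to $\mu$ rather than equidistributed among children, and why only the Federer property, and not absolute decay, is needed; and (ii) securing the \emph{uniform} validity over $\supp\mu\cap U$ of the estimate $\mu(B(x,\rho))\le\rho^s$, which forces the preliminary reduction to a positive-measure subset of $U$ on which the $\liminf$ in \equ{dmu} is attained uniformly.
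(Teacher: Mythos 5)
The paper does not prove this proposition at all --- it is quoted verbatim from \cite[Proposition 5.1]{KW2} --- so the only benchmark is the argument in that reference, which is precisely the tree-of-plays plus mass-distribution scheme you propose. Your architecture is sound and your key quantitative observation is exactly the right one: because the children of a cell $B_w$ are a maximal $2\rho_{k+1}$-separated subset of $\supp\mu\cap B\big(\x_w',\alpha\rho_k(1-\beta)\big)$, the covering-plus-doubling estimate gives $\sum_j\mu(B_{w_j})\ge c_0\,\mu(B_w)$ with $c_0=c_0(\alpha,D)$ independent of $\beta$ and $k$ (one needs a $\beta$-independent number of doublings to pass from radius $\alpha\rho_k(1-\beta)\ge\alpha\rho_k/2$ up to $2\rho_k\ge\rho_k$, and one more to shrink $2\rho_{k+1}$ to $\rho_{k+1}$), so distributing $\nu$ proportionally to $\mu$ makes the exponent loss $\theta=\log(1/c_0)/\log\big(1/(\alpha\beta)\big)$ tend to $0$ as $\beta\to0$. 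The children are admissible Bob moves, each branch is a legal play against Alice's fixed winning strategy, and the limit set lies in $S\cap K\cap U$. All of this is correct.

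The step that does not survive scrutiny is the preliminary reduction you flag in (ii). There is in general \emph{no} open subset of $U$ on which $\mu\big(B(\x,\rho)\big)\le\rho^s$ holds uniformly for all $\rho\le2\rho_0$: the set of points where this bound holds down to a fixed scale has positive measure (it exhausts $\supp\mu\cap U$ as the scale shrinks), but it need not contain any relatively open piece of $K$, and your cell centers are chosen as maximal separated subsets of $\supp\mu$ inside Alice's balls, so nothing prevents them from landing where the pointwise estimate has not yet kicked in. The repair is standard and costs nothing: do not demand the bound at the cell centers, but at the branch limit points. For $z\in\mathcal C$ and $\rho_{k+1}\le r<\rho_k$, the level-$(k+1)$ cells meeting $B(z,r)$ are pairwise disjoint and contained in $B(z,2\rho_k)$, so $\nu\big(B(z,r)\big)\le C c_0^{-(k+1)}\sum_i\mu(B_{w_i})\le Cc_0^{-(k+1)}\mu\big(B(z,2\rho_k)\big)$; since $\supp\mu$ is closed, $z\in\supp\mu\cap U$, hence $\underline{d}_\mu(z)\ge\underline{d}_\mu(U)>s$ and $\mu\big(B(z,2\rho_k)\big)\le(2\rho_k)^s$ for all $k\ge k_0(z)$. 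This gives $\liminf_{r\to0}\log\nu\big(B(z,r)\big)/\log r\ge s-\theta$ at every $z\in\mathcal C$ (with constants depending on $z$, which is harmless), and the local form of the mass distribution principle --- \cite[Proposition 4.9]{Fa}, the very statement the paper invokes to interpret \equ{dmu} --- yields $\dim(\mathcal C)\ge s-\theta$. With this modification, and no uniformization of the pointwise dimension anywhere, your proof is complete.
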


%D
In particular, if in addition $\mu$ is $(C,\gamma)$-absolutely decaying,  in the above proposition one can replace $\underline{d}_\mu(U)$ 
with $\gamma$, and with $\dim(K)$ if $\mu$
satisfies a power law.
Note that this generalizes  estimates for the \hd\ of winning sets due to Schmidt \cite{S1}
 for $\mu$ being Lebesgue  on
$\R^n$, and to Fishman  \cite[\S 5]{F} for measures satisfying a power law.
\medskip

The next lemma exhibits a crucial feature of  sets supporting 
absolutely decaying measures,
%sets, 
namely the fact that while playing Schmidt's game on such a set,
Alice can distance herself from hyperplanes `efficiently'.
%\comdima{Need an attribution to \cite{M} and maybe its version for $K = \rn$.}
%It is clear from the definition of absolutely friendly measures that for a small enough
%$\alpha$ one can avoid a hyperplane on each turn, but in fact it is possible to
%distance oneself from a fixed percentage of any number of them. 
This observation is the cornerstone of the proof of our main theorem.
%D
The argument has been adapted from the one in \cite{M}, where the case $K = \rn$ was proved
with $\alpha = \frac12$ (see \S\ref{rn} for more detail), and then refined using an observation from \cite{ET}.
\comdima{Maybe we don't have to say it again, or do we?}

\begin{lem}
\label{log turns}
For every $C, \gamma>0$ and
\eq{alpha}{
\alpha < \frac{1}{2C^{1/\gamma}+1}
}
there exists 
$\varepsilon=\varepsilon(C,\gamma,\alpha)\in (0,1)$ such that if $K$ is the support of a 
$(C,\gamma)$-absolutely decaying  measure $\mu$ on $\rn$,
$0 < \rho < \rho_{C,\gamma}(\mu)$, $\x_1 \in K$, $N\in \N$, and $\cl_1,\dots,\cl_N$ are hyperplanes in $\rn$, 
there exists $\x_2\in K$ with
\eq{containment}{B(\x_2,\alpha\rho) \subset B(\x_1,\rho)}
and %, for at least $\varepsilon N$ of the hyperplanes $\cl_i$,
\eq{distance}{d(B(\x_2,\alpha\rho),\cl_i) > \alpha\rho\quad\text{for at least $\lceil\varepsilon N\rceil$ of the hyperplanes }\cl_i.}
\end{lem}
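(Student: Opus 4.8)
The plan is to translate the two requirements into geometry and then produce the center $\x_2$ by a first-moment (averaging) argument against $\mu$. Condition \equ{containment} holds precisely when $d(\x_1,\x_2)\le(1-\alpha)\rho$, i.e. when $\x_2$ lies in the smaller ball $B'\df B(\x_1,(1-\alpha)\rho)$; and since $d\big(B(\x_2,\alpha\rho),\cl_i\big)\ge d(\x_2,\cl_i)-\alpha\rho$, condition \equ{distance} for a given index $i$ is guaranteed as soon as $d(\x_2,\cl_i)>2\alpha\rho$, that is, as soon as $\x_2\notin \cl_i^{(2\alpha\rho)}$. So it suffices to locate $\x_2\in K\cap B'$ lying outside at least $\lceil\varepsilon N\rceil$ of the neighborhoods $\cl_i^{(2\alpha\rho)}$.

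The crux is to estimate, for each $i$, the measure of $B'\cap\cl_i^{(2\alpha\rho)}$ directly against $\mu(B')$. Applying the absolute decay inequality \equ{ad} to the ball $B'$ itself (its center $\x_1$ lies in $K$ and its radius $(1-\alpha)\rho$ is $<\rho_{C,\gamma}(\mu)$), with neighborhood parameter $2\alpha\rho$, gives
\[
\mu\big(B'\cap\cl_i^{(2\alpha\rho)}\big)<C\left(\frac{2\alpha\rho}{(1-\alpha)\rho}\right)^{\gamma}\mu(B')=\eta\,\mu(B'),\qquad \eta\df C\left(\frac{2\alpha}{1-\alpha}\right)^{\gamma}.
\]
A direct computation shows $\eta<1$ is equivalent to $\alpha(2C^{1/\gamma}+1)<1$, which is exactly the hypothesis \equ{alpha}; this is the precise role of the threshold on $\alpha$.

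Now I would run the averaging. Let $g(\x)\df\#\{i:\x\in\cl_i^{(2\alpha\rho)}\}$ count the neighborhoods containing $\x$, and integrate it against the probability measure $\mu'\df\mu|_{K\cap B'}/\mu(B')$ (here $\mu(B')>0$ since $\x_1\in\supp\mu$). Interchanging sum and integral and using the bound above for each $i$,
\[
\int g\,d\mu'=\frac{1}{\mu(B')}\sum_{i=1}^N\mu\big(K\cap B'\cap\cl_i^{(2\alpha\rho)}\big)<\eta N.
\]
Setting $\varepsilon\df 1-\eta\in(0,1)$, I claim the set $A\df\{\x\in K\cap B':g(\x)\le N-\lceil\varepsilon N\rceil\}$ has positive measure. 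Indeed, if $\mu(A)=0$ then $g\ge N-\lceil\varepsilon N\rceil+1$ holds $\mu'$-a.e., forcing $\int g\,d\mu'\ge N-\lceil\varepsilon N\rceil+1$; together with the previous display this gives $N(1-\eta)+1<\lceil(1-\eta)N\rceil$, contradicting $\lceil x\rceil<x+1$. Hence $\mu(A)>0$, so $A$ is nonempty and, lying in $K$, supplies a point $\x_2\in K\cap B'$ with $g(\x_2)\le N-\lceil\varepsilon N\rceil$, i.e. $\x_2$ is at distance $>2\alpha\rho$ from at least $\lceil\varepsilon N\rceil$ of the $\cl_i$. By the first paragraph this is exactly \equ{containment} and \equ{distance}, and $\varepsilon=1-\eta$ depends only on $C,\gamma,\alpha$ as required.

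The one nonroutine point—and the main obstacle—is that the statement assumes only absolute decay, not the Federer/doubling property, so one cannot afford to compare $\mu\big(B(\x_1,\rho)\big)$ with $\mu(B')$ by passing between concentric balls. Applying \equ{ad} to the smaller ball $B'$ directly is precisely what sidesteps this, and it is simultaneously what yields the sharp factor $\tfrac{2\alpha}{1-\alpha}$ and hence the exact threshold \equ{alpha}. Once that comparison is in hand, the geometric reductions and the first-moment estimate are entirely routine.
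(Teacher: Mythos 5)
Your proof is correct and is essentially the paper's own argument: both reduce the two conclusions to finding a point of $K\cap B(\x_1,(1-\alpha)\rho)$ avoiding at least $\lceil\varepsilon N\rceil$ of the neighborhoods $\cl_i^{(2\alpha\rho)}$, apply the decay condition \equ{ad} to the shrunken ball to get the bound $\eta=C\bigl(\tfrac{2\alpha}{1-\alpha}\bigr)^{\gamma}<1$ (equivalent to \equ{alpha}), and conclude by averaging a counting function against $\mu$ with $\varepsilon=1-\eta$. The only cosmetic difference is that you count the neighborhoods containing $\x$ while the paper counts their complements, and you spell out the positive-measure sublevel-set step a bit more carefully.
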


\begin{proof}
Let $A_i = B\big(\x_1,(1-\alpha)\rho\big)\ssm \cl_i^{(2\alpha\rho)}$.
By 
%D
%Definition \ref{decay} 
\equ{ad} and \equ{alpha}, for each $1 \leq i \leq N$,
$$\frac{\mu(A_i)}{\mu\big(B(\x_1,(1-\alpha)\rho)\big)} > 1 - C\left(\frac{2\alpha}{1-\alpha}\right)^\gamma\df \varepsilon > 0.$$
We claim there exist $j_1, \dots, j_k$, where $k \geq \lceil\varepsilon N\rceil$, such that
$K \cap \bigcap_{i=1}^k A_{j_i} \neq \varnothing$.
To see this, let $f(\x) = \sum_{i=1}^N \chi_{A_i}(\x)$.
Then
%\comdima{Can you add a bit more explanations here? The conclusion is not clear to me.}
%\comliorryan{We forgot to put some $\mu(B(\x_1,\rho))$s in. Hopefully it makes more sense now.}
$$\displaystyle\int_{B(\x_1,(1-\alpha)\rho)} f(\x)\, d\mu(\x) \geq N \varepsilon\mu\big(B(\x_1,(1-\alpha)\rho)\big),$$
so clearly there exists some $\x_2 \in K$ with $f(\x_2) \geq N\epsilon$. Since
$f(\x_2) \in \Z$, there must exist $j_1,\dots,j_k$ as above.
Hence, $\x_2$ satisfies \equ{containment} and \equ{distance}.
\end{proof}

We will also need the following corollary of the above lemma:

%D
\begin{cor}\label{minusone}
Let $K$ be the support of
a $(C,\gamma)$-absolutely decaying measure on 
%subset of 
$\rn$, 
 let $\alpha$ be as in \equ{alpha},  let $S\subset \rn$ be $\alpha$-winning on $K$,
 and let $S'\subset S$ be a countable union of hyperplanes. Then $S\ssm S'$ is also  $\alpha$-winning on $K$.
\end{cor}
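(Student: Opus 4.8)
The plan is to show that $S \ssm S'$ is $\alpha$-winning on $K$ by exhibiting a winning strategy for Alice in the game played on $K$. The idea is to reduce to the fact that $S$ is already $\alpha$-winning on $K$, so Alice has a strategy $\sigma$ that forces $x_\infty \in S$. The only danger is that $x_\infty$ might land in $S'$, which is a \emph{countable} union of hyperplanes, say $S' \subset \bigcup_{i=1}^\infty \cl_i$. So alongside running $\sigma$ to guarantee $x_\infty \in S$, Alice must arrange that $x_\infty \notin \cl_i$ for every $i$. The natural approach is to interleave: fix an arbitrary $\beta \in (0,1)$, and have Alice devote certain rounds to escaping particular hyperplanes using Lemma \ref{log turns}, while following $\sigma$ on all other rounds. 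Since $\sigma$ wins against \emph{every} play of Bob, and Alice's hyperplane-avoiding moves are themselves legal moves in the original $(\alpha,\beta)$-game, the interleaving still produces a sequence of nested balls whose limit lies in $S$.

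First I would set up the bookkeeping. The key quantitative input is that Lemma \ref{log turns} (applied with $N=1$, a single hyperplane) produces, given any legal ball $B(\x_1,\rho)$ centered in $K$ and any hyperplane $\cl$, a point $\x_2 \in K$ with $B(\x_2,\alpha\rho)\subset B(\x_1,\rho)$ and $d(B(\x_2,\alpha\rho),\cl) > \alpha\rho$; this is exactly a legal Alice move (of ratio $\alpha$) that pushes Alice's ball off $\cl$ by a definite margin proportional to its radius. Because subsequent balls shrink by the fixed factor $\alpha\beta < 1$ per round and all later balls are contained in $B(\x_2,\alpha\rho)$, once Alice has achieved $d(B(\x_2,\alpha\rho),\cl_i) > \alpha\rho$ at some stage, the distance from the \emph{eventual} limit point $x_\infty$ to $\cl_i$ stays bounded below by a positive constant, so $x_\infty \notin \cl_i$ permanently. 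The main subtlety is that Lemma \ref{log turns} is a statement about a single Alice move, whereas $\sigma$ also prescribes Alice's moves; I would resolve this by observing that at Alice's turn she is free to make \emph{any} legal move, and on the rounds she ``borrows'' to escape $\cl_i$ she simply ignores $\sigma$ for that round. The point is that $\sigma$ is a winning strategy no matter how Bob plays and no matter what the current position is, so after an escape round Alice resumes $\sigma$ from the new position as though Bob had just moved there.

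The cleanest way to organize the interleaving is to enumerate $\cl_1,\cl_2,\dots$ and designate an increasing sequence of rounds $k_1 < k_2 < \cdots$ (for instance the even-indexed rounds) on which Alice performs the escape from $\cl_i$ via Lemma \ref{log turns}; on every other round she plays according to $\sigma$. Every $\cl_i$ gets escaped at some finite round, guaranteeing $x_\infty \notin \bigcup_i \cl_i \supset S'$, while the rounds devoted to $\sigma$ still suffice to force $x_\infty \in S$ because $\sigma$ wins against arbitrary Bob play and the escape rounds can be regarded as part of Bob's play from $\sigma$'s perspective. The hard part will be the last point: making precise that inserting Alice's own ``free'' moves does not destroy the winning property of $\sigma$. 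The resolution is that $\alpha$-winning means Alice wins for \emph{every} $\beta$ and against \emph{every} opponent strategy; a move Alice makes by Lemma \ref{log turns} is a legitimate Alice move of the correct ratio $\alpha$, so the resulting position is one that could have arisen in honest play, and from there $\sigma$ continues to guarantee $x_\infty \in S$. Combining the two guarantees yields $x_\infty \in S \ssm S'$, so $S \ssm S'$ is $(\alpha,\beta)$-winning on $K$ for every $\beta$, i.e.\ $\alpha$-winning on $K$, as required.
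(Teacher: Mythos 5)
Your plan is correct in outline and uses the right key ingredient (Lemma \ref{log turns} with $N=1$), but it takes a more laborious route than the paper and contains one step that needs repair. The paper simply writes $S\ssm S' \supset S\cap\bigcap_i(\rn\ssm\cl_i)$, shows that a \emph{single} set $\rn\ssm\cl$ is $(\alpha,\beta)$-winning on $K$ for every $\beta$ (Alice plays arbitrarily until Bob's radius drops below $\rho_{C,\gamma}(\mu)$, applies Lemma \ref{log turns} once with $N=1$ to get a ball a definite distance from $\cl$, then plays arbitrarily), and then invokes Schmidt's theorem that a countable intersection of sets $\alpha$-winning on $K$ is again $\alpha$-winning on $K$ --- a fact already quoted in \S\ref{Schmidt}. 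You instead re-derive that intersection property by hand via interleaving, which is more work and is where the danger lies.

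The step that would fail as literally stated: after an escape round you ``resume $\sigma$ from the new position as though Bob had just moved there.'' A strategy $\sigma$ for the $(\alpha,\beta)$-game is only defined on positions whose radii follow the schedule $\rho,\alpha\rho,\alpha\beta\rho,\dots$; if you insert an extra Alice--Bob exchange, then between two consecutive turns on which $\sigma$ is consulted the radius has shrunk by $\alpha\beta^{2}$ (or by $(\alpha\beta)^{k}\beta$ if $k$ rounds are skipped), not by $\beta$, so the position is \emph{not} one that could have arisen in honest $(\alpha,\beta)$-play and $\sigma$ gives no guarantee there. The fix is exactly the one underlying Schmidt's intersection theorem, and you gesture at it without executing it: because $S$ is $\alpha$-winning for \emph{every} $\beta$, Alice should choose in advance a strategy for $S$ in the $(\alpha,\beta')$-game with the effective parameter $\beta'$ determined by the skipped rounds (e.g.\ $\beta'=\alpha\beta^{2}$ if she escapes on every other round), and run \emph{that} strategy on the $\sigma$-rounds. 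With a fixed interleaving pattern this makes $\beta'$ constant and the argument goes through; you should also insert the preliminary phase of arbitrary moves to get the radius below $\rho_{C,\gamma}(\mu)$ before Lemma \ref{log turns} can be applied. Once these points are made precise your argument is a correct, self-contained proof, but it amounts to reproving the countable-intersection property that the paper cites as a black box.
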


%\comliorryan{Should we show $K\setminus \{y\}$ is $\alpha$-winning and drop
%the last sentence?}
%\comdima{Changed it -- is it OK now?}
\begin{proof} In view of the countable intersection property,
it suffices to show that for any hyperplane $\cl\subset\rn$, the set $\rn\ssm\cl$ is $(\alpha,\beta)$-winning on $K$ for any $\beta$. Let $\mu$ be a
$(C,\gamma)$-absolutely decaying measure with $K = \supp\,\mu$.
We let Alice play arbitrarily until the radius of a ball chosen by Bob is
less than $\rho_{C,\gamma}(\mu)$. Then apply Lemma \ref{log turns} with $N = 1$ and $\cl_1 = \cl$,
which yields a ball disjoint from $\cl$. Afterwards she can keep playing arbitrarily, winning the game.
\end{proof}

%\centerline{\bf My changes end here. DK}
%\vfil\eject
\section{Proofs}\label{proofs}

Let us now state a more precise version of Theorem \ref{main theorem}:

\begin{thm}
\label{precise theorem}
%For any $C, \gamma, D > 0$, 
%there exists $\alpha = \alpha(C,\gamma, D) \in (0,1)$ such
%that if $K$ is the support of a $(C,\gamma,D)$-absolutely friendly measure on
%$\rmb$, 
%D
Let $K$ be the support of
a $(C,\gamma)$-absolutely decaying measure on 
%subset of 
$\rn$, and
 let $\alpha$ be as in \equ{alpha}. Then for any
uniformly discrete sequence $\ca$ of subsets of $\R^m$ 
and any  lacunary sequence $\cm$ of $m\times n$ real matrices, the set
$\tilde E(\cm,\ca)$ is $\alpha$-winning on $K$.
\end{thm}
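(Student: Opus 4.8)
The plan is to exhibit a winning strategy for Alice in the game played on $K$ with her parameter $\alpha$ as in \equ{alpha} and an \emph{arbitrary} $\beta\in(0,1)$, and to arrange that the resulting point $\x_\infty$ lies in $\tilde E(\cm,\ca)$ of \equ{defema}. Write $\lambda\df\inf_k \|M_{k+1}\|_{op}/\|M_k\|_{op}>1$ and fix $\delta>0$ with $\ca$ being $\delta$-uniformly discrete. The first step is to convert, for each index $k$, the condition $d(M_k\x,Z_k)>0$ into staying away from a \emph{single} hyperplane. Pick a row of $M_k$ of maximal Euclidean length, say the $i_k$-th row $\mathbf r_k$; then $\|M_k\|_{op}/\sqrt m\le\|\mathbf r_k\|\le\|M_k\|_{op}$, and for any $z\in Z_k$ one has $\|M_k\x-z\|\ge|\mathbf r_k\cdot\x-z_{i_k}|=\|\mathbf r_k\|\,d(\x,\cl_{k,z})$, where $z_{i_k}$ is the $i_k$-th coordinate of $z$ and $\cl_{k,z}\df\{\x:\mathbf r_k\cdot\x=z_{i_k}\}$. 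Thus if Alice keeps $\x_\infty$ at distance $>\alpha\rho$ from the relevant $\cl_{k,z}$ at a scale $\rho$ with $\rho\|M_k\|_{op}$ bounded below, she secures a uniform positive lower bound for $d(M_k\x_\infty,Z_k)$.

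\textbf{Scale matching and one hyperplane per index.} The radii $\rho_j=(\alpha\beta)^{j-1}\rho_1$ decay geometrically, while lacunarity makes $\|M_k\|_{op}$ grow at least geometrically; hence for each $k$ there is a (unique) \emph{critical round} at which $\rho_j\|M_k\|_{op}$ first enters the fixed window $\big((\alpha\beta)\,\delta/4,\ \delta/4\big]$. At such a scale the ball $B$ of radius $\rho$ satisfies $\diam M_kB\le 2\rho\|M_k\|_{op}\le\delta/2$, so by $\delta$-uniform discreteness at most one $z\in Z_k$ can have $\|M_k\x-z\|\le\delta/5$ for some $\x\in B$, every other point of $Z_k$ being automatically at distance $>\delta/5$ from $M_kB$. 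Therefore index $k$ contributes, at its critical scale, the single hyperplane $\cl_k\df\cl_{k,z}$, and once a ball is $\alpha\rho$-separated from $\cl_k$ the separation is inherited by all nested successors, giving $d(M_k\x_\infty,Z_k)\ge\min\!\big(\delta/5,\ \alpha\rho\|M_k\|_{op}/\sqrt m\big)$. The same lacunarity count shows that the number of indices whose critical round equals a given round is bounded by a constant $P=P(\alpha,\beta,\lambda,\delta)\le 1+\log\!\big(1/(\alpha\beta)\big)/\log\lambda$.

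\textbf{Applying Lemma \ref{log turns}; the main obstacle.} Alice peels off the hyperplanes $\cl_k$ using Lemma \ref{log turns} (after a harmless initial segment of free moves until $\rho<\rho_{C,\gamma}(\mu)$, which only defers finitely many indices). This is exactly where the difficulty lies. A single application of the lemma avoids only a fixed fraction $\varepsilon=\varepsilon(C,\gamma,\alpha)$ of the hyperplanes presented, so when $\beta$ is small — and many indices are critical in one round — Alice cannot clear them all in one move; note that this is precisely why the lemma is quantitative (fraction $\lceil\varepsilon N\rceil$) rather than all-or-nothing, and why the naive route of intersecting the individually winning sets $\{d(M_k\x,Z_k)>0\}$ (each winning by Corollary \ref{minusone}) fails to force a \emph{positive} infimum. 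I would resolve this by processing in blocks of $L=L(\varepsilon,P)$ consecutive rounds: each batch of newly-critical hyperplanes is deferred to the next block, and within a block Alice repeatedly applies Lemma \ref{log turns} to the still-unavoided hyperplanes, cutting their number by the factor $(1-\varepsilon)$ each round, so that after $L$ rounds — chosen so that $(1-\varepsilon)^L\,LP<1$ — the whole batch is permanently avoided. The crux to verify is that this keeps the \emph{delay} bounded: each $\cl_k$ is cleared within $2L$ rounds of its critical round, hence at a scale $\rho$ with $\rho\|M_k\|_{op}\gtrsim(\alpha\beta)^{2L}\delta$, a bound independent of $k$. Combined with the reduction above this gives $\inf_k d(M_k\x_\infty,Z_k)\gtrsim\tfrac{\alpha}{\sqrt m}(\alpha\beta)^{2L}\delta>0$, so $\x_\infty\in\tilde E(\cm,\ca)$. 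The delicate part is entirely this scheduling and the uniform-in-$k$ delay bound; the geometry of moving within $K$ while escaping hyperplanes is supplied wholesale by Lemma \ref{log turns}.
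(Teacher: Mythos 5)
Your proposal is correct and follows essentially the same route as the paper's proof: lacunarity bounds the number of indices per multiplicative scale window, uniform discreteness isolates a single relevant target point of $Z_k$ per index at the matching scale, and Lemma \ref{log turns} is applied over a block of rounds whose length $L$ is chosen so that the factor $(1-\varepsilon)^L$ beats the linearly growing batch size, giving a uniform-in-$k$ delay and hence a positive infimum of $d(M_k\x_\infty,Z_k)$. The only differences are cosmetic --- you reduce to hyperplanes via a maximal row of $M_k$ where the paper uses a unit vector realizing $\|M_k\|_{op}$ and the preimage of its orthogonal complement, and you batch per round with deferred blocks where the paper uses epochs of $r$ rounds indexed by windows of $t_k=\|M_k\|_{op}$; the finitely many indices with no critical round (those with small $\|M_k\|_{op}$) are disposed of, as you implicitly acknowledge, exactly as in the paper via Corollary \ref{minusone}.
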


\begin{proof}
%Let $\alpha$ and $\varepsilon$ be as in Lemma \ref{log turns} and let $0 < \beta < 1$. 
Write $\cm = (M_k)$, let
%Suppose $K$ supports a $(C,\gamma,D)$-absolutely friendly measure
%and $\ca$ is $\delta$-uniformly discrete.
%Let 
$t_k \df \|M_k\|_{{op}}$
and let $\vbf_k$ be a unit vector satisfying
$$ \|M_k\vbf_k\| =t_k.$$
%Suppose
%\begin{equation}
%\label{ratio}
Take $\delta > 0$ such that $\ca$ is $\delta$-uniformly discrete, and let
 \eq{ratio}{\inf_k \frac{t_{k+1}}{t_k} = Q > 1\,.} Now pick an arbitrary $0 < \beta < 1$, 
 take  $\varepsilon$ as in Lemma \ref{log turns}, and 
choose $N$ large enough that 
\eq{large N}
{(\alpha\beta)^{-r} \leq Q^N\text{, where } r =  \lfloor \log_{\frac{1}{1-\varepsilon}} N\rfloor +1\,.}
We will denote
by $M_k^{-1}(Z)$ the preimage of a set $Z \subset \rn$ under $M_k$.
Notice that for each $k\in\N$, $M_k^{-1}(Z_k)$ is contained in
a countable union of \ignore{positively separated} hyperplanes,
so applying Corollary \ref{minusone} a finite number of times, we may assume that $t_1 \geq 1$.
%\comdima{Decided to switch $j$ and $k$, and also switched $m$ and $n$, please check!}

By playing arbitrary moves if needed, 
we may assume without loss of generality that $B(\omega_1)$ has radius
\eq{rho small}{\rho
	< \min\left(\frac{\alpha\beta\delta}{4},\rho_{C,\gamma}\right)\,.} 
%where $\rho_0$ is as in Definition \ref{friendly}.
Now let \eq{defc}{c = \min\left(\rho (\alpha\beta)^{2r-1},\frac{\delta}{4}\right).}
We will describe a strategy for Alice to play the $(\alpha,\beta)$-game on $K$ and 
to ensure that for all $j\in\N$,  for all 
$\x \in B(\omega_{r(j+1)}')$ and for all $k$ with  $1 \leq t_k < 
(\alpha \beta)^{-r j}$, one has
$d(M_k\x,Z_k) > c $. This will imply that 
$\bigcap_k B(\omega^{\prime}_k) \in \tilde E(\cm,\ca) \cap K$, finishing the proof.

To satisfy the above goal, Alice can choose $\omega'_i$ arbitrarily for $i < r$.
Now fix $j\in \N$. By \equ{ratio} and \equ{large N}, 
there are at most $N$ indices $k \in \N$ for which 
\eq{indices}{(\alpha\beta)^{-r (j-1)} \leq t_k  < (\alpha \beta)^{-r j}.}
Let $k$ be one of these indices.
For any $\x \in\rn$,
$ \| \x \| 
	\geq \frac{1}{t_k}\|M_k(\x)\|$.
Thus, if $\y_1,\y_2$ are two different points in  $Z_k$, then by \equ{rho small} and \equ{indices}
\eq{dist2}{d\Big(M_k^{-1}\big(B(\y_1,c)\big),M_k^{-1}\big(B(\y_2,c)\big)\Big) 
	\geq \frac{\delta-2c}{t_k} \geq \frac{\delta}{2t_k}
		> \frac{\delta}{2}(\alpha\beta)^{rj} \geq 2\rho(\alpha\beta)^{rj-1};}
therefore $B(\omega_{rj})$ intersects with at most one set of the form $M_k^{-1}\big(B(\y,c)\big)$, where
$\y\in Z_k$. Hence, for each $k$ satisfying \equ{indices},
\eq{Z preimage}
{B(\omega_{rj}) 
\cap M_k^{-1}(Z_k^{(c)}) 
\subset M_k^{-1}\big(B(\y,c)\big) \text{ for some }\y\in Z_k.}
We will now show that the preimage of such a ball is contained in a `small enough' neighborhood of some hyperplane, so that we can apply the decay condition. Toward this end,
let $V\subset\rmb$ be the hyperplane perpendicular to $M_k\vbf_k$ and passing through
${\mathbf{0}}$. Then $$W \df M_k^{-1}(V)$$ is a hyperplane in $\rn$ passing through ${\mathbf{0}}$.

%preimage is hyperplane
\ignore{
To see this, first note that $W$ is a subspace, by linearity of $M_j$. Let $\vbf_j, \ubf_1,\dots, \ubf_{m-1}$
be a basis for $\rmb$ such that $\ubf_1 \notin W$ . Then $M_j\ubf_1 = \w + tM_j\vbf_j$,
where $\w \in W$ and $t \in \R$. But then $\vbf_j, \ubf_1-t\vbf_j,\ubf_2,\ubf_3,\dots,\ubf_{m-1}$
is also a basis for $\rmb$ and $\ubf_1-t\vbf_j \in W$. By induction, we can find a basis
$\vbf_j,\ubf_1^{'}, \dots, \ubf_{m-1}^{'}$ for $\rmb$ with $\ubf_i^{'} \in W$ for $i=1,\dots,m-1$.
Clearly, $\vbf_j \not\in W$, so $W \neq \rmb$. Hence, 
$W$ must be a hyperplane passing through $\mathbf{0}$.}
%end preimage is hyperplane

If $\x\not\in W^{(c/t_k)}$, then $\x = \w + \eta \vbf_k$ for some
$\eta > c/t_k$ and $\w\in W$, thus
$$\|M_k\x\| = \|M_k\w + M_k\eta\vbf_k\| \geq \eta\|M_k\vbf_k\| = t_k\eta > c\,.$$
Hence, $M_k^{-1}\big(B({\mathbf 0},c)\big) \subset W^{(c/t_k)}$, which clearly implies that for each $\y\in Z_k$,
$M_k^{-1}\big(B(\y,c)\big) \subset \cl^{(c/t_k)}$ for some hyperplane $\cl \subset \rn$.
By \equ{defc} and \equ{indices},
$$\frac{c}{t_k} \leq (\alpha\beta)^{r(j+1)-1}\rho\df \zeta\,.$$
%\comdima{I think a little more detail is needed here. Why $r$ times? what exactly are those $N$ hyperplanes?
%how is \equ{whatfor} used? we need to help the reader keep track of the proof.}
Therefore, by \equ{Z preimage},
\eq{thickness}{\displaystyle\bigcup_{t_k \text{ satisfies \equ{indices} }} B(\omega_{rj})\cap M_k^{-1}\big(Z_k^{(c)}\big)	\subset \bigcup_{i=1}^N \cl_i^{(\zeta)},}
where $\cl_i$ are hyperplanes.
Noticing that by \equ{large N} $(1-\varepsilon)^rN <1$, Alice can utilize
Lemma \ref{log turns} $r$ times to distance herself by $\zeta$ from each of the
hyperplanes $\cl_i$ after $r$ turns.
Thus for $k$ satisfying \equ{indices},
it holds that 
$$B(\omega_{r(j+1)}') \cap M_k^{-1}\big(Z_k^{(c)}\big) = \varnothing\,.$$
We conclude that $d(M_k\x, Z_k) \geq c$ for any $\x \in B(\omega_{r(j+1)}')$, which implies the desired statement.
\end{proof}

%\centerline{\bf My changes end here. DK}

\ignore{The following simple observation will be useful.

\begin{cor}
\label{finun}
If $\ct$ is a finite union of lacunary sequences, then $\tilde E(\ct,\cy)$ is $\alpha$-winning,
where $\alpha$ is as in Lemma \ref{log turns}.
\end{cor}
}

%toral endomorphisms comment
\ignore{
\begin{remark}
Clearly, taking the matrices above to be in $\text{Mat}_{n\times n}(\Z)$, we have
that for a large class of sequences $\cm$ of toral endomorphisms, $\tilde E(\cm,\cy)$
is winning. In particular, one can take $\cm = (M^j)$, where $M$ is semi-simple. 
(see \cite[Theorem ?]{D})
\end{remark}
}

\begin{proof}[Proof of Theorem \ref{Daninew}] Recall that we are given $M \in \GL_n(\R) \cap \M_{n}(\Z)$.
If all the eigenvalues of $M$
have modulus less than or equal to $1$, then obviously
every eigenvalue of $M$ must have modulus $1$.
By a theorem of Kronecker \cite{Kro}, they must be roots of unity,
so there exists an $N\in\N$ such that the only eigenvalue of $M^N$ is $1$. 
Let $J = L^{-1}M^NL$ be the Jordan normal form of $M^N$,
and let $\vbf_i = L\ve_i$, $i = 1,\dots,n$, be the Jordan basis for $M^N$. 
Then, % clearly $M^N\vbf_1 = \vbf_1$,
%so 
since $M^N$ is an integer matrix, %$\vbf_1 \in \Q^n$.
%Furthermore, 
we have 
%either $M^N\vbf_2 = \vbf_2$
%or $M^N\vbf_2 = \vbf_1+\vbf_2$. In either case,
%$(M^N-I)\vbf_2 \in  \Q^n$, so $\vbf_2 \in \Q^n$.
%Similarly, 
$\vbf_i \in \Q^n$ for each $1\leq i \leq n$.
Hence, letting $V = \text{span}(\vbf_1,\dots,\vbf_{n-1})$,
$V+\Z^n$ is a union of positively separated parallel hyperplanes.
Since $J$
fixes the last coordinate
of any vector, if $a_1,\dots,a_n \in \R$, then
$$M^N\left(\displaystyle\sum_{i=1}^n a_i\vbf_i\right) 
	\in a_n\vbf_n + V.$$
Therefore, for $\x, \y\in \R^n$ with $\x-\y \not\in V+\Z^n$ and any $k\in \N$ %, then since $V+\Z^n$ is closed
one has $$d(M^{Nk}\x,\y+\Z^n) \geq c_0 d(\x-\y,V+\Z^n) > 0,$$
where $c_0$ is a positive constant depending only on $\vbf_1,\dots,\vbf_n$.
Hence, for any $y \in\T^n$, $$\tilde E(M^N,y) \supset  \R^n \smallsetminus (\pi^{-1}(y) + V) = \R^n \smallsetminus (\y+V+\Z^n)\,,$$
where $\y$ is an arbitrary vector in $\pi^{-1}(y)$. Thus $\tilde E(M^N,y)$
is $\alpha$-winning on $K$ by Corollary \ref{minusone}.
Hence $\tilde E(M^N, z)$ is $\alpha$-winning on $K$ whenever $z\in f_M^{-i}(y)$, where $0 \leq i < N$.
Thus the intersection $$\tilde E(M,y) = \bigcap_{i=0}^{N-1}\bigcap_{z\in f_M^{-i}(y)} \tilde E\big(M^N, f_M^{-i}(y)\big)$$
is also $\alpha$-winning on $K$.
\smallskip

In the case where at least one of the eigenvalues is of absolute value strictly greater than 1,
we will show that the sequence $(\| M^k \|_{op})$ is a finite union of lacunary sequences, 
which will clearly imply that $\tilde E\big((M^k),\ca\big)$ is $\alpha$-winning on $K$.
Let $J = L^{-1}ML$ be the Jordan normal form of $M$. Since the operator norm of
$M$ as a real transformation is equal to its operator norm as a complex transformation
and $$\|J^k\|_{op} \leq \|L\|_{op}\|L^{-1}\|_{op}\|M^k\|_{op} \,\,\,\,\,
	\text{and}\,\,\,\,\,\,\|M^k\|_{op} \leq \|L\|_{op}\|L^{-1}\|_{op}\|J^k\|_{op},$$
letting $c = \|L\|_{op}\|L^{-1}\|_{op}$, we have
$\frac{1}{c}\| M^k\|_{op} \leq \| J^k\|_{op} \leq c\|M^k\|_{op}$ for all $k\in\N$.
%ignored
\ignore{
Hence, for all $m,n\in\N$,
$$
\frac{\|M^{m}\|_{op}}{\|R^{n}\|_{op}} \geq \frac{c_1}{c_2}\frac{\|J^{m}\|_{op}}{\|J^{n}\|_{op}}.
$$
}
%end ignored
Hence, if $(\|J^k\|_{op})$ is eventually lacunary, then there exists $\ell, N \in \N$ 
and $Q > 1$ such that,
for all $k\geq N$,
$$
\frac{\|M^{k+\ell}\|_{op}}{\|M^{k}\|_{op}} \geq \frac{1}{c^2}\frac{\|J^{k+\ell}\|_{op}}{\|J^{k}\|_{op}} 
	\geq Q\,.
$$
Thus  it will suffice to show that $(\|J^k\|_{op})$ is eventually lacunary.

Let $B$ be an $m\times m$ block of $J$ associated to an eigenvalue $\lambda$
and write $B^k = \big(b_{ij}{(k)}\big)$.
Direct computation shows that, for $0 \leq j-i \leq k$,
\eq{entries}{
b_{ij}{(k)} = {k\choose j-i}\lambda^{k-(j-i)},
}
and $b_{ij}{(k)} = 0$ otherwise.
Since $|b_{ij}(k)| = o(|b_{1m}(k)|)$ as functions of $k$ for all $(i,j) \neq (1,m)$,
\eq{domentry}{
\lim_{k\to\infty} \frac{\|B^k\|_{op}}{|b_{1m}(k)|} = 1.
}
Hence,
\eq{block}{
\lim_{k\to\infty} \frac{\|B^{k+1}\|_{op}}{\|B^{k}\|_{op}} = |\lambda|,
}
so clearly if $|\lambda| >1$ then $(\|B^k\|_{op})$ is eventually lacunary. 
Write $J= B_1 \oplus \dots \oplus B_s$,
where $s\in\N$ and $B_i$ are the Jordan blocks, with associated eigenvalues $\lambda_i$.
Let $\lambda_{max}= \max |\lambda_i|$,
and let $B_{max}$ be a block with associated eigenvalue 
having absolute value $\lambda_{max}$ and of maximal dimension among
such blocks.
By \equ{entries} and \equ{domentry}, for any $i$,
$$\lim_{k\to\infty} \frac{\|B_{max}^k \|_{op}}{\|B_i^k \|_{op}} \geq 1.$$
Hence, by \equ{block},
$$
\lim_{k\to\infty} \frac{\|J^{k+1} \|_{op}}{\|J^k \|_{op}}
	= \lim_{k\to\infty} \frac{\|B_{max}^{k+1} \|_{op}}{\|B_{max}^k \|_{op}} = \lambda_{max}.
$$
Since by assumption $M$ (and therefore $J$) has an eigenvalue with absolute value greater than $1$,
$(\|J^k\|_{op})$ is eventually lacunary.
\end{proof}

%semisimple
\ignore{Let $\lambda$ be the largest absolute value of any eigenvalue of $R$.
Let $\vbf$ be any real, normed vector contained in the space spanned by eigenvectors
having eigenvalue with absolute value $\lambda$.
Since $R$ is semisimple, there exist $c_1,c_2> 0$ such that

$$\|R^j\vbf\| \geq c_1\lambda^j\|\vbf\|$$
and, for all $\ubf \in \rn$,
$$\|R^j\ubf \| \leq c_2\lambda^j\|\ubf\|.$$
Let $k\in\N$ be large enough that $\lambda^k > \frac{c_2}{c_1}$.
Then, for any normed $\ubf \in \rn$,
$$\frac{\|R^{j+k}\vbf\|}{\|R^j\ubf \|} \geq \frac{c_1\lambda^{j+k}}{c_2\lambda^j} 
	= \frac{c_1}{c_2}\lambda^k > 1$$
Hence, $\car_i = (R^{i+jk})_{j\in\N}$ is a lacunary sequence of matrices for each $i = 1,2,\dots,k$,
so letting $\cy_i = (y_{i+jk})_{j\in\N}$, $\tilde E(\car_i,\cy_i)$ is $\alpha$-winning,
where $\alpha$ is as in Theorem \ref{main theorem}.
Since $\tilde E(R,\cy)= \cap_{i=1}^k \tilde E(\car_i,\cy_i)$, the corollary follows.}
%end semisimple

%toral endo
\ignore{
\begin{cor}
For any $C, \gamma, D > 0$ and $n\in\N$, there exists $\alpha = \alpha(C,\gamma,D,n) > 0$
such that if $K$ is the support of a $(C,\gamma,D)$-absolutely friendly measure on $\rn$,
$A \subset \Tn$ is countable, then $\cap \tilde E(R,A)$ is $\alpha$-winning,
where $R$ ranges over all semisimple surjective endomorphisms of $\Tn$.
\end{cor}
}
%end toral endo

In the remaining part of this section we apply Theorem \ref{main theorem} to badly approximable systems of affine forms.

\begin{proof}[Proof of 
 Corollary \ref{affine forms}] Recall that we need to fix $A\in \M_{n\times m}(\R)$ and study the set
 $$
\mathbf{Bad}_A(n,m) = \left\{ \x \in \rn : \inf_{\q\in\Z^m\nz}\|\q\|^{m/n}d(A\q - \x,\zn) > 0\right\}\,.
$$
First observe  that the above set is easy to understand in the `rational' case
when there exists a nonzero $\ubf \in \Z^n$
such that $A^T\ubf \in \Z^m$ (or equivalently, when the rank of the group $A^T\Z^n + \Z^m$ 
is strictly smaller than $ m+n$). In this case, by a theorem of Kronecker, see  \cite[Ch.\ III, Theorem IV]{C}, 
%there exists  $c > 0$ such that  
$ \inf_{\q\in\Z^m}d(A\mathbf{q}-\x,\Z^n) $ is positive
%for any $\mathbf{q}\in \Z^m$
 if and only if the value of $\ubf\cdot \x$ is not an integer.
Therefore 
$$\mathbf{Bad}_A(n,m)  \supset \{\x\in \R^n : \ubf\cdot \x \notin \Z\}.$$
Since the right-hand side is the complement of a countable union of hyperplanes, 
in view of  Corollary \ref{minusone}
$\mathbf{Bad}_A(n,m)$ is $\alpha$-winning on $K$ whenever $K$ is absolutely decaying 
 and $\alpha$  is as in Theorem \ref{main theorem}.

\ignore{
\begin{thm}[Kronecker's Theorem]

For $A \in \M_{n\times m}(\R)$ and $\x\in\R^n$, the following are equivalent:
\begin{enumerate}
\item There exists $c > 0$ such that for any $\mathbf{q}\in \Z^m$, $d(A\mathbf{q}-\x,\Z^n) > c$.
\item There exists $\ubf\in \Z^n$ such that $A^T\ubf \in \Z^m$ but $\x\cdot \ubf \notin \Z$.
\end{enumerate}
\end{thm}

Notice that for $m=n=1$ the above conditions are clearly equivalent:
If $A = \frac{k}{\ell}$ then $d(Aq +x,\Z) \geq d(x,\frac{1}{\ell}\Z)$, which is positive
if and only if $x\notin \frac{1}{\ell}\Z$, precisely when (2) holds; if $A\notin \Q$, then for all $x\in\R$,
$(A\q + x)\mod 1$ is dense in $[0,1]$, so neither condition holds.

So when $m=n=1$ and $A = \frac{k}{\ell}$, 
$A^T\ubf \in \Z^m$ whenever $\ubf \in \ell \Z$, so the theorem implies that the complement of 
$\mathbf{Bad}_A(1,1)$ is contained in the countable set $\frac{1}{\ell}\Z$.

%\begin{proof}[Proof of Corollary \ref{affine forms}]
First consider the case that there is some $\ubf \in \Z^n\setminus\{\mathbf{0}\}$
such that $A^T\ubf \in \Z^m$. Then, by Kronecker's Theorem, if $\x\cdot \ubf \not\in \Z$,
then, for some $c > 0$, $d(A\mathbf{q} - \x,\Z^n) > c$ for all $\mathbf{q}\in \Z^m$.
Hence, $$\mathbf{Bad}_A(n,m) \supset \{\x\in \R^n : \x\cdot \ubf \notin \Z\}.$$
Since the right-hand side is the complement of a countable union of hyperplanes,
$\mathbf{Bad}_A(n,m)$ is $\alpha$-winning by Corollary \ref{minusone}.}

\smallskip
In the more interesting `irrational' case %that no such $\ubf$ exists, we have
when $\text{rank}(A^T\Z^n + \Z^m) = m+n$, 
one can utilize the theory of best approximations to $A$ as developed by Cassels
\cite[Ch.\ III]{C} and recently made more precise by Bugeaud and Laurent \cite{BL}.\ignore{To see this, suppose $\text{rank}(A^T\Z^n + \Z^m) < m+n$.
Then $\Z^m+\sum_{i=1}^{j} \Z A^T\mathbf{e}_i = \Z^m+\sum_{i=1}^{j+1} \Z A^T\mathbf{e}_i$
for some $1\leq j \leq n-1$, so $A^T\mathbf{e}_{j+1} = \mathbf{z} + \sum_{i=1}^jA^T(a_i\mathbf{e}_i )$ for some $a_i \in \Z$. 
Taking $\ubf = \mathbf{e}_{j+1}-\sum_{i=1}^j a_i\mathbf{e}_i$, we have $A^T\ubf \in \Z^m$
and $\ubf \neq 0$.}
In \cite[\S\S 5--6]{BHKV}, using results from \cite{BL}, it is shown that if $\text{rank}(A^T\Z^n + \Z^m) = m+n$, then
there exists a lacunary sequence of vectors $\y_k \in \Z^n$ (a subsequence of the sequence of
best approximations to $A$) such that whenever $\x\in\rn$ satisfies $$\inf_{k\in\N}d(\y_k\cdot \x, \Z) > 0\,,$$
it follows that $\x\in \mathbf{Bad}_A(n,m)$. In other words, 
 $$\tilde E(\cy,\ca) \subset \mathbf{Bad}_A(n,m)\,,$$
 where $\cy \df (\y_k)$ and $\ca = (Z_k)$ with $Z_k = \Z$ for each $k$. (See also \cite[\S2]{M} for an alternative exposition.)
Therefore in this case $\mathbf{Bad}_A(n,m)$ is $\alpha$-winning on $K$ by Theorem \ref{main theorem}.
\end{proof}

\section{Concluding remarks}\name{finalremarks}
\subsection{Playing on $\rn$ with $\alpha = 1/2$}\name{rn} As was mentioned before, the special case
$K = \R^n$ of our main theorem is essentially contained in \cite{M}. In fact, arguing as in \S \ref{proofs} and
using \cite[Lemma 2]{M} (the analogue of our Lemma \ref{log turns}) and  \cite[Lemma 3]{M} (Schmidt's escaping lemma, cf.\ \cite[Ch.\ 3, Lemma 1B] {S3}), one can show that for  $\ca$  and  
 $\cm$  as in Theorem \ref{main theorem} and any  $\alpha,\beta > 0$ with $1 + \alpha\beta - 2\alpha> 0$,  the sets
$\tilde E(\cm,\ca)$ are $(\alpha,\beta)$-winning. In particular, this shows that one can take $\alpha(\R^n) = 1/2$
in Theorems \ref{Daninew} and  \ref{main theorem}.

\subsection{Strong winning}\name{strong} Recently in \cite{Fae, FPS} and independently
in \cite{Mc} a
 modification of Schmidt's game has been introduced, where
condition \equ{balls} is replaced by \eq{strongballs}{\rho_k' \geq \alpha \rho_k\text{ and }\rho_{k+1} \geq \beta \rho_k'\,.}
Following  \cite{Mc}, a subset $S$ of a metric space $ X$
is said to be {\sl $(\alpha,\beta)$-strong winning\/} if Alice
 has a winning strategy in the game defined by  \equ{strongballs}.
Analogously, one defines $\alpha$-strong winning and strong winning sets.  %%y
It is not hard to verify that
strong winning implies winning (see \cite{FPS} for a proof), 
and that a countable intersection of $\alpha$-strong winning sets is $\alpha$-strong winning. Furthermore, this class has stronger invariance properties,
e.g.\ it is proved in \cite{Mc} that strong winning subsets of $\R^n$ are preserved by quasisymmetric 
homeomorphisms.

It is not hard to modify the proofs given above to show that in Theorem \ref{main theorem}
(and therefore in all its corollaries), $\alpha$-winning may be replaced by
$\alpha$-strong winning. This is done by adding `dummy moves' in order to
accommodate the possibly slower decrease in radii of the chosen balls. Details will appear elsewhere. 

\ignore{Specifically, using the notation
in the proof of the theorem, 
choose $N^{\prime}$ large enough that
\eq{large N2}{(\alpha\beta)^{-r^{\prime}} \leq 
Q^{N^{\prime}} \text{, where } r^{\prime} 
= \lfloor \log_{\frac{1}{1-\epsilon}} N^{\prime} \rfloor + 2.}

Then for each $j\in \N$, there are at most $N^{\prime}$ indices $k$ satisfying
\eq{indices2}{(\alpha\beta)^{-r^{\prime}(j-1)} \leq t_k < (\alpha\beta)^{-r^{\prime}j}.}

Let $c^{\prime} = \min\left(\rho(\alpha\beta)^{2r^{\prime}},\frac{\delta}{4} \right)$.
We will use induction to show that Alice can play in such a way that, for some sequence
of indices $i_j$, 
\eq{induct1}{B(\omega_{i_j})\cap M_k^{-1}(Z_k^{(c^{\prime})}) = \varnothing \text{ for all } k 
	\text{ satisfying } t_k < (\alpha\beta)^{-r^{\prime}(j-1)}}
and 
\eq{induct2}{|B(\omega_{i_j})| \geq 2\rho(\alpha\beta)^{r^{\prime}j-1}.}
By Corollary \ref{minusone},
we may assume $t_1 > 1$, so \equ{induct1} and \equ{induct2} 
trivially hold with $j=1$ and $i_1 = 1$.
Now fix $j\in\N$ and suppose Alice has played so that
\equ{induct1} and \equ{induct2} hold for some $i_j$.
Alice will play arbitrarily until Bob first chooses a ball $B(\omega_i)$
with radius less than or equal to $(\alpha\beta)^{r^{\prime}j-1}\rho$.
Since $c^{\prime} \leq \frac{\delta}{4}$, if $k$ satisfies \equ{indices2} 
and $\y_1,\y_2$ are two different points in  $Z_k$,
\equ{dist2} holds with $c$ replaced by $c^{\prime}$ and $r$ replaced by $r^{\prime}$, so $B(\omega_{i})$ intersects with at most one set of the form $M_k^{-1}(B(\y,c^{\prime}))$ for each $k$ satisfying \equ{indices2}.
Following the proof of the theorem, we have $M_k^{-1}(B(\y,c^{\prime}))\subset \cl^{(\zeta)}$,
where $\zeta \df (\alpha\beta)^{r^{\prime}(j+1)}\rho \geq \frac{c^{\prime}}{t_k}$.
By inductive hypothesis and the fact that the radius decreases by at most a factor of $\alpha\beta$
after each round of turns, $|B(\omega_i)| \geq 2\rho(\alpha\beta)^{r^{\prime}j}$,
so we will have $|B(\omega_{i+r^{\prime}-1})| \geq 2\rho(\alpha\beta)^{r^{\prime}(j+1)-1}$
regardless of the play of Alice and Bob.
Hence, by \equ{large N2}, Alice can apply Lemma \ref{log turns} $r^{\prime}-1$ times 
to ensure that \equ{induct1} and \equ{induct2} hold for $j+1$ and $i_{j+1} = i + r^{\prime}-1$.
	}

\ignore{
First suppose the group $M^T\zn +\zm$ has rank strictly less than $n+m$. Then it is easy to
see that $M\zm$ is contained in a union of parallel, positively separated hyperplanes.
Otherwise, as shown in \cite{BHKV}, there exists
a lacunary sequence of vectors $\y_i \in \zn$ such that, for sufficiently small $c > 0$
$$\tilde E(\{\y_i\},\Z,c )\df\{\x\in K : d(\y_i\cdot\x,\Z) \geq c\ \forall\ i\in\N\} \subset \mathbf{Bad}_M(n,m).$$
On the other hand, clearly $c_1 \geq c_2$ implies 
$\tilde E(\{\y_i\},\Z,c_1) \subset \tilde E(\{\y_i\},\Z,c_2)$.
Hence,
$$\tilde E(\{\y_i\},\Z) = \bigcup_{c>0} \tilde E(\{\y_i\},\Z,c) \subset \mathbf{Bad}_M(n,m),$$
and thus, by Theorem \ref{main theorem}, the corollary follows.
\end{proof}
}

%begin cassels' argument
\ignore{
Corollary \ref{affine forms} will clearly follow from our main theorem together with
the following proposition.

\begin{prop}
\label{Cassels}
For any $M \in \text{Mat}_{n\times m}(\R)$, there exists a lacunary sequence $\ubf^{(r)}\in\zn$
such that $ \tilde E(\ubf^{(r)},\Z) \subset B_M(n,m)$.

\end{prop}

To prove this we will need the following lemma (Lemma 4 in \cite{C}). Fix
$M \in \text{Mat}_{n\times m}(\R)$ and 
for $\rho \geq 1$, let 
$$\eta(\rho)= 
\min_{\substack{\ubf\in\zn\\ 
0 <\|\ubf\| \leq \rho }}
	\text{dist} (M^T\ubf,\zm).$$

\begin{lem}
\label{cas4}
For any $k > 1$ there exists a sequence of integer vectors 
$\ubf^{(r)} = (u_{r,1}, \dots, u_{r,n}) \neq \mathbf{0}$ with Euclidean norms
$\rho_r$, such that
\begin{equation}
\label{caseq7}
\rho_1 \leq k
\end{equation}
\begin{equation}
\label{caseq8}
\frac{\rho_{r+1}}{\rho_r} \geq k
\end{equation}
\begin{equation}
\label{caseq9}
\text{dist}( M^T\ubf^{(r)},\zm) = \eta(k^{-1}\rho_{r+1})
\end{equation}
The sequence is infinite unless there is a nonzero integer vector $\ubf$
such that $M^T\ubf \in \zm$. If there is such a $\ubf$, the sequence
terminates with a $\ubf^{(R)}$ such that $M^T\ubf^{(R)}\in\zm$ but 
$M^T\ubf^{(r)}\not\in\zm$ for $r<R$.
\end{lem}

\begin{proof}[Proof of Proposition \ref{Cassels}]
Let $\ubf^{(r)}$ be as in Lemma \ref{cas4} with $k=3$, and let $\y \in  \tilde E(\ubf^{(r)},0)$.
Then there exists $c = c(\y) > 0$ such that $d(\ubf^{(r)}\y,\z) \geq c$.
Let $\q \in \Z^m \setminus \{\mathbf{0}\}$
and define $C = d(M\q,\zn)$ and $Q = \|\q\|$.
Then, since $\ubf^{(r)}$ and $\q$ are integer vectors,
$$d(\ubf^{(r)}\y,\Z) = \|\sum_j u^{(r)}_j\eta_j\| 
	\leq \|\sum_ju^{(r)}_j(\eta_j-L_j(\x)) \| + \|\sum_ju^{(r)}_jL_j(\x) \|$$
	\begin{equation}
	\label{ineq}
	\leq n\rho_r C + \|\sum_j x_i M_i(\ubf^{(r)}) \| \leq n\rho_r C + m X D_r,
	\end{equation}
where, by (\ref{caseq9}), 
\begin{equation}
\label{defDr}
D_r \df \text{dist}(M^T\ubf^{(r)},\zm) = 
	\begin{cases}
		\eta(\frac13\rho_{r+1})&\text{if } r\neq R\\
		0&\text{if } r=R\end{cases}.
\end{equation}
First consider the case that $mD_1X \geq \frac{c}{2}$. Then, since
by Dirichlet's Theorem $\eta(\rho) \to 0$ as $\rho \to \infty$, there exists $r\in\N$ such
that
\begin{equation}
\label{ineq2}
mD_{r-1}X \geq \frac{c}{2} \geq mD_rX.
\end{equation}
By (\ref{ineq}) and (\ref{ineq2}),
$$c \leq \|\ubf^{(r)}\eta\| \leq n\rho_rC+mXD_r \leq n\rho_rC + \frac{c}{2},$$
so $n\rho_rC \geq \frac{c}{2}$. Hence, by (\ref{ineq2}),
$$X^mC^n \geq \frac{c^{m+n}}{(2m)^m(2n)^nD_{r-1}^m\rho_r^n}.$$
By Lemma \ref{cas3} and (\ref{defDr}), there exists positive $\Gamma_{m,n,\eta}^{'}$
such that $X^mC^n \geq \Gamma^{'}_{m,n,\eta}$.

Otherwise, $mD_1X < \frac{c}{2}$, so $n\rho_1C > \frac{c}{2}$ by (\ref{ineq}).
Since $\rho_1 \leq k = 3$ by (\ref{caseq7}) and trivially, since $\x \in \zm \setminus \{\mathbf{0}\}$,
 $X =\max_i |x_i| \geq 1$,
$$X^mC^n \geq C^n \geq \left(\frac{c}{2n\rho_1} \right)^n \df \Gamma_{m,n,\eta}^{''} > 0.$$
Taking $\Gamma_{m,n,\eta} = \min(\Gamma_{m,n,\eta}^{'},\Gamma_{m,n,\eta}^{''})$,
$$\inf_{\x\in\zm\setminus\{\mathbf{0}\}} 
\left(\max_{1\leq j\leq n} \|L_j(\x) -\eta_j\|\right)^n\left(\max_{1\leq i\leq m}|x_i|\right)^m 
	\geq \Gamma_{m,n,\eta} > 0.$$
\end{proof}
}
%end cassels' argument

%begin vague reduction
\ignore{
In \cite[Chapter 5, Lemma 2]{C}, Cassels shows that for any lacunary sequence
of vectors $\ubf^{(r)}$ with lacunary constant at least $3$, 
there exists $\mathbf{\eta} = (\eta_1,\dots,\eta_m) \in\tilde E(\ubf^{(r)},0)$ such that
$c(\mathbf{\eta}) \geq \frac14$, where $c(\mathbf{\eta})$ is as in Definition \ref{defemy}.
He then proves in  \cite[Chapter 5, Theorem X]{C}
that, for a specific sequence $\ubf^{(r)}$,
this $\mathbf{\eta}$ satisfies (\ref{forms ineq}).
It is easily seen that in fact any $\mathbf{\eta} \in E(\ubf^{(r)},0)$ satisfies 
(\ref{forms ineq}). (In Cassels's proof, simply
replace the constant $\frac{1}{4}$ with $c(\mathbf{\eta})$ and $\frac{1}{8}$ with 
$\frac{1}{2}c(\mathbf{\eta})$.)
By our main theorem, the corollary follows.
}
%end vague reduction

\bibliographystyle{alpha}

\end{document}